\documentclass[3p,12pt]{elsarticle}

\usepackage[T1]{fontenc}
\usepackage{lmodern}
\usepackage{amsmath,amssymb,amsthm,mathtools}
\usepackage{enumitem}
\usepackage{booktabs}
\usepackage{microtype}
\usepackage{needspace}
\usepackage{tikz}
\usetikzlibrary{positioning}
\usepackage[hidelinks]{hyperref}
\journal{Topology and its Applications}
\biboptions{sort&compress}

\newtheorem{theorem}{Theorem}[section]
\newtheorem{proposition}[theorem]{Proposition}
\newtheorem{lemma}[theorem]{Lemma}
\newtheorem{corollary}[theorem]{Corollary}
\theoremstyle{definition}

\theoremstyle{remark}

\newtheorem{question}[theorem]{Question}

\newcommand{\Is}{\operatorname{Is}}
\newcommand{\SI}{\operatorname{SI}}
\newcommand{\SIcpt}{\operatorname{SI}_{\mathrm{cpt}}}
\newcommand{\SIrw}{\operatorname{SI}_{\mathrm{RW}}}
\newcommand{\SIcoh}{\operatorname{SI}_{\mathrm{coh}}}
\newcommand{\OO}{\mathcal O}
\newcommand{\id}{\operatorname{id}}

\newcommand{\tprod}{\tau_{\mathrm{prod}}}
\newcommand{\tpt}{\tau_{\mathrm{pt}}}
\newcommand{\up}{\mathord{\uparrow}}
\newcommand{\da}{\mathord{\downarrow}}

\begin{document}

\begin{frontmatter}

\title{Scott--Isbell Coincidence for Continuous Dcpos beyond Bicompleteness}

\author[scu]{Yuxu Chen}
\ead{chenyuxu@scu.edu.cn}
\address[scu]{School of Mathemtaics, Sichuan University, Chengdu 610015, China}

\author[scu]{Yunjie Wen}

\author[yc]{Xiaoyong Xi}

\address[yc]{School of Mathematics and Statistics, Yancheng Teachers University, Yancheng 224002,  China}

\begin{abstract}
Lawson and Mislove posed the following problem in 1990 as Problem~535 in \emph{Open Problems in Topology}: for a core-compact space \(X\) and a dcpo \(P\) equipped with its Scott topology, under what conditions on \(P\) do the Isbell and Scott topologies on \(C(X,P)\) agree?
It was proved that, for a nonempty bicomplete continuous dcpo \(P\), the Isbell and Scott topologies on \(C(X,P)\) coincide for every core-compact space \(X\) if and only if \(P\) is bounded complete; for every compact core-compact space \(X\) if and only if \(P\) is conditionally bounded complete; and for every RW-space \(X\) if and only if \(P\) is a pointed continuous \(L\)-domain.
We remove the bicompleteness assumption from all three classifications by combining the forbidden-retract theorem of Jia, Jung and Li with a separation theorem for powers of downward well-ordered chains and suitable Alexandrov test spaces. 

\end{abstract}

\begin{keyword}
Scott topology \sep Isbell topology \sep continuous dcpo
\sep core-compact space \sep RW-space 
\end{keyword}

\end{frontmatter}

\section{Introduction}
\label{sec:introduction}

Let $X$ be a topological space and let $L$ be a dcpo equipped with its Scott
topology. We write $C(X,L)$ for the set of continuous maps from $X$ to the
Scott space $\Sigma L$, ordered pointwise. This function space carries two
canonical topologies. Its order gives the Scott topology
$\sigma(C(X,L))$, while the topologies of $X$ and $L$ give the Isbell
topology $\Is(C(X,L))$ \cite{Isbell1975}. One always has
\[
  \Is(C(X,L))\subseteq\sigma(C(X,L)),
\]
and the problem is to determine when equality holds.

Lawson and Mislove considered two related versions of this comparison problem \cite{LawsonMislove1990}. The unrestricted version asks, for a topological space $X$ and a dcpo $L$, when the two topologies on $C(X,L)$ agree. In Problem~535 they singled out the case in which $X$ is core compact and asked for conditions on the dcpo $L$. We study three formulations of this problem, characterizing the continuous dcpos $L$ for which coincidence holds for every space $X$ in a prescribed class. This formulation is in line with the maximal-class approach of Lawson and Xu \cite{LawsonXu2003} and its Scott--Isbell development by Xi, Liu, Ho and Zhao \cite{XiLiuHoZhao2017}.

Liu and Liang obtained early results for continuous $L$-domains
\cite{LiuLiang1996}. Xi and Yang later proved that, among bicomplete
continuous dcpos, coincidence for all core-compact sources is equivalent to
bounded completeness, while coincidence for all compact core-compact sources
is equivalent to conditional bounded completeness \cite{XiYang2014}. Xi
proved the corresponding result for RW-spaces: in the same bicomplete ambient
class, coincidence for all RW-spaces characterizes pointed continuous
$L$-domains \cite{Xi2005}. Xi, Liu, Ho and Zhao subsequently studied
maximal source--target pairings for Scott--Isbell coincidence, including the
compact core-compact and RW cases, again with targets restricted to
bicomplete domains \cite{XiLiuHoZhao2017}. We remove the bicompleteness
assumption from all three target-side classifications.

For a continuous dcpo $L$, let $\SI(L)$ mean that
\[
  \Is(C(X,L))=\sigma(C(X,L))
  \quad\text{for every core-compact space }X,
\]
let $\SIcpt(L)$ denote the analogous condition with $X$ restricted to
compact core-compact spaces, and let $\SIrw(L)$ denote coincidence for every
RW-space. Our main results are the following.

\begin{theorem}\label{thm:main-corecompact}
Let $L$ be a nonempty continuous dcpo. Then $\SI(L)$ holds if and only if $L$ is
bounded complete.
\end{theorem}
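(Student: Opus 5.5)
The plan splits into the two implications. For the easy direction (bounded complete $\Rightarrow$ $\SI(L)$), I will invoke the classical theory rather than reprove it. A bounded complete continuous dcpo $L$ with $X$ core compact is handled by the known coincidence results: Liu--Liang for continuous $L$-domains, and the Xi--Yang argument, whose sufficiency half does not use bicompleteness. Concretely, for a Scott open $\mathcal U\subseteq C(X,L)$ and $f\in\mathcal U$, I approximate $f$ from below by finite suprema of step functions $U\searrow b$, where $U\ll V$ in $\OO(X)$ and $b\ll f(x)$ on $V$. These suprema exist precisely because $L$ is bounded complete, since every finite family bounded by $f$ has a join. Continuity of $L$ and core compactness of $X$ make $f$ the directed supremum of such joins. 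Hence some finite join $g=\bigvee_i (U_i\searrow b_i)\ll f$ lies in $\mathcal U$. The set $\bigcap_i\{h: h(V_i)\subseteq \up b_i\}$ is then an Isbell-open neighbourhood of $f$ inside $\up g\subseteq\mathcal U$. For this I will use the Isbell subbasic opens $\{h:h^{-1}(\mathrm{int}\,\up b_i)\in \mathcal H_i\}$, where $\mathcal H_i$ is the Scott open filter generated by $U_i\ll V_i$.

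The substantive direction is necessity: $\SI(L)$ forces bounded completeness without bicompleteness. First I show that $\SI$ is inherited by Scott-continuous retracts. If $r\colon L\to M$ and $s\colon M\to L$ satisfy $rs=\id$, then $f\mapsto s\circ f$ and $g\mapsto r\circ g$ are continuous for both the Scott and Isbell topologies on function spaces. So $C(X,M)$ is a retract of $C(X,L)$ in both topologies, and coincidence transfers.

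Next I use the forbidden-retract theorem of Jia, Jung and Li. A continuous dcpo that is not bounded complete must contain one of a small list of "bad" configurations as a retract, or must contain a non-bounded-complete piece witnessed by a pair with no join. In the remaining cases, failure of bicompleteness yields a downward well-ordered chain with no infimum, or an upper set of a pair with infinitely many minimal elements. For the finite forbidden retracts, such as the four-element poset with two incomparable elements below two incomparable ones and variants with a bottom, I will exhibit explicit core-compact Alexandrov test spaces $X$ for which Isbell and Scott differ. These are the standard counterexamples already behind Xi--Yang.

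The main obstacle is the non-bicomplete case, where the minimal upper bounds of a pair form an infinite antichain, or where the relevant structure is a descending chain without a meet. Here I will use the separation theorem for powers of downward well-ordered chains. Taking $X$ to be a suitable Alexandrov space, for instance a discrete or Alexandrov space indexed by the chain, makes $C(X,L)$ contain a copy of a power $C^I$ of a downward well-ordered chain. I then find a Scott open set in $C(X,L)$ around a function $f$ that contains no Isbell neighbourhood of $f$. The separation theorem says the product topology on $C^I$ is strictly coarser than the Scott topology at the relevant point, and I lift this through the retraction onto that copy. I expect the delicate part to be making the embedding a retraction with respect to both topologies, so that the failure of coincidence on the power transfers to $C(X,L)$.
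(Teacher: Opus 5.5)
Your sufficiency direction is sound. Where the paper just notes that a bounded complete continuous dcpo is bicomplete and cites Xi--Yang, you run the classical step-function argument directly. There is one slip: the Isbell neighbourhood is $\bigcap_i[\mathcal H_i,\{y:b_i\ll y\}]$ with $V_i\in\mathcal H_i\subseteq\up U_i$. It forces $h(U_i)\subseteq\up b_i$, not $h(V_i)\subseteq\up b_i$, and the reverse inclusion needs $\Is\subseteq\sigma$.

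The gap is in necessity, and it starts with a misdirected case analysis. The Jia--Jung--Li theorem concerns only the failure of bicompleteness, and it yields exactly a Scott-continuous retract $R\in\{W,K(W),K(W)_\bot\}$. An infinite antichain of minimal upper bounds is not part of that picture; it already occurs in bicomplete, finite-height algebraic dcpos. The paper needs no list of finite forbidden retracts here. Retract inheritance plus the failure of $\SI(R)$ for these three $R$ shows that $\SI(L)$ forces bicompleteness. Xi--Yang's bicomplete classification then gives bounded completeness.

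More seriously, you have no mechanism transferring the power separation when $R$ is $K(W)$ or $K(W)_\bot$, which is all that theorem may give you.
\begin{itemize}
\item For $R=W$ a discrete source $I$ suffices: $C(I,W)=W^I$, and the Isbell topology is the product topology there, since Isbell equals pointwise on Alexandrov sources by Xi--Xu--Lawson.
\item But $W$ is not a retract of $K(W)$ or $K(W)_\bot$. A section $s$ is injective and monotone, and every $w\in W$ has infinitely many elements below it, so $s(W)\subseteq W$. Then $r(a)\leq r(s(w))=w$ for all $w$, making $r(a)$ a least element of $W$.
\item So you cannot get a copy of $W^I$ as a retract of the function space by postcomposition, and you offer no other construction. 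The step you defer as ``delicate'' is exactly the missing one.
\end{itemize}

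The missing idea is the compact two-root fan $X_I=\{p,q\}\cup\{d_i:i\in I\}$.
\begin{itemize}
\item Since $a,b$ are compact, $H=\{f:f(p)\geq a,\ f(q)\geq b\}$ is Scott open in $C(X_I,R)$.
\item Restriction to the $d_i$ is a Scott-continuous retraction of $H$ onto $(\up a\cap\up b)^I=W^I$.
\item Its section is continuous from $\tprod$ to the relative pointwise topology.
\end{itemize}
Hence a Scott-open, non-product-open subset of $W^I$ pulls back to a Scott-open, non-Isbell-open subset of $C(X_I,R)$.

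Finally, indexing the test space by the chain does not work in general. For $W=\mathbb N^{\mathrm{op}}$, countable powers satisfy $\tprod=\sigma$. The index set $I$ must have cardinality $\kappa^+$, where $\kappa$ is the coinitiality of $W$.
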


\begin{theorem}\label{thm:main-compact}
Let $L$ be a nonempty continuous dcpo. Then $\SIcpt(L)$ holds if and only if $L$ is
conditionally bounded complete.
\end{theorem}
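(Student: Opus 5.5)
Both implications of Theorem~\ref{thm:main-compact} will be handled separately; the argument runs parallel to the one for Theorem~\ref{thm:main-corecompact}.

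\emph{Sufficiency.} Let $L$ be conditionally bounded complete (every nonempty upper-bounded subset has a supremum) and let $X$ be compact and core compact. Fix a Scott open $\mathcal U\subseteq C(X,L)$ and $f\in\mathcal U$. Since $L$ is continuous and $X$ is core compact, $f$ is the directed supremum of finite suprema of step functions $(U\searrow a)$ with $U\ll V\subseteq f^{-1}(\twoheaduparrow a)$. When $L$ is bounded complete, these finite suprema exist and are continuous. In the conditional case, every such family is bounded above by $f$, so its supremum exists; it is continuous by the usual argument. Nonemptiness is the delicate point: a step function takes the value ``$\bot$'' off $U$, and $L$ may lack a bottom. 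Here compactness of $X$ is used. Cover $X$ by finitely many $U_i\ll V_i$ with $f(V_i)\subseteq\twoheaduparrow a_i$. At each point $x$ the relevant set of values $\{a_i : x\in U_i\}$ is then nonempty and bounded by $f(x)$, so the pointwise supremum is defined everywhere. Hence $f$ is a directed supremum of ``finite step maps'' $g$ lying way below $f$, and some such $g$ lies in $\mathcal U$. The set $\{h : h(\overline{U_i})\subseteq\twoheaduparrow a_i \text{ for all } i\}$, rewritten with Isbell subbasic sets $N(\mathcal H,\twoheaduparrow a_i)$ where $\mathcal H$ is Scott open in $\OO(X)$ with $V_i\in\mathcal H\subseteq{\up}U_i$, is an Isbell neighbourhood of $f$. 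Every $h$ in it satisfies $h\geq g$, so it is contained in $\mathcal U$. Thus $\Is=\sigma$.

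\emph{Necessity.} Suppose $\SIcpt(L)$ holds but $L$ is not conditionally bounded complete. Then some nonempty pair, or more generally some bounded subset, has no supremum, while its set of upper bounds is nonempty. Since $L$ is continuous, the set of minimal upper bounds is governed by way-below approximations. The plan is to invoke the Jia--Jung--Li forbidden-retract theorem to extract a Scott retract of $L$ of one of a few standard forbidden shapes. In the bicomplete-free setting, these shapes include a pair with two incomparable minimal upper bounds, or with a descending, downward well-ordered chain of upper bounds that has no infimum above the pair. Coincidence passes to retracts: if $r\colon L\to R$ is a Scott retraction, then $C(X,R)$ is a retract of $C(X,L)$ for both topologies. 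So it suffices to refute $\SIcpt$ for each forbidden shape. For each shape, choose a compact Alexandrov test space $X$ (a finite poset, or an ordinal-indexed poset with a top, so it is compact). Then $C(X,R)$ becomes a set of monotone maps, essentially a subspace of a power $R^{X}$. Exhibit a Scott open set that is not Isbell open: an upper set of monotone maps avoiding a Scott-closed configuration.

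The main obstacle is the chain case. Here the upper bounds form a downward well-ordered chain $C$ with no least element, and one must show that the Scott topology on the power $C^{\kappa}$ (monotone maps) strictly exceeds the Isbell, equivalently product-like, topology. This is the separation theorem for powers of downward well-ordered chains. I would first try a diagonal Scott open set $\{h : h(x_\alpha)\notin\,\da c_\alpha \text{ for some } \alpha\}$ indexed along the chain. Then show that every basic Isbell neighbourhood constrains only finitely many coordinates up to a compact set, and therefore meets its complement via a transfinite descending sequence.
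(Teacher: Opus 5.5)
Your sufficiency argument is correct, but it takes a different route from the paper. You work directly: you approximate $f$ by the maps $g(x)=\sup\{a_i:x\in U_i\}$ built from finite covers of the compact space $X$ by opens $U_i\ll V_i\subseteq f^{-1}(\{y:a_i\ll y\})$. Then you trap $f$ in $\bigcap_i[\mathcal H_i,\{y:a_i\ll y\}]\subseteq\up g$, with $\mathcal H_i=\{O\in\OO(X):W_i\ll O\}$ for interpolants $U_i\ll W_i\ll V_i$. The closures $\overline{U_i}$ play no role and should be dropped. The paper instead lifts to $L_\bot$, applies Theorem~\ref{thm:main-corecompact} (hence Xi--Yang), and restricts to the Scott-open set $L\subseteq L_\bot$ via Proposition~\ref{prop:target-inheritance}(ii). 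Your version has the advantage of being self-contained.

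The necessity direction has a genuine gap at the reduction step. The forbidden-retract theorems that produce the relevant shapes (Theorems~\ref{thm:JJL-bicomplete}(ii), \ref{thm:JJL-nonL}, \ref{thm:AJ-L-not-BC}) all require a pointed dcpo, and $L$ need not be pointed. Applying Theorem~\ref{thm:JJL-bicomplete}(i) to $L$ itself does not help. First, $L$ can be bicomplete without being conditionally bounded complete (e.g.\ $\mathsf B_4$). Second, the retract produced may be $W$, which is conditionally bounded complete and so satisfies $\SIcpt(W)$ by your own sufficiency argument.

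The missing idea is to pass to $P=L_\bot$, which is pointed, continuous, and bounded complete iff $L$ is conditionally bounded complete. From $P$ one extracts $K(W)_\bot$, $\mathsf B_5^\top$ or $\mathsf B_5$, by a case split on bicompleteness and the $L$-dcpo property. A pointed retraction $\rho\circ\iota=\id_R$ is then pulled back into $L$: the set $U=\rho^{-1}(R\setminus\{\bot_R\})$ is a Scott-open sub-dcpo of $L$ retracting onto $R\setminus\{\bot_R\}$. This uses that, for compact $X$, coincidence passes to Scott-open sub-dcpos, because $C(X,U)=[\{X\},U]$ is open in both topologies. That is exactly where compactness of the source enters, and your plan has no substitute for it. Only after this step are the obstructions to refute identified: $K(W)$, $\mathsf B_4^\top$ and $\mathsf B_4$.

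Your treatment of the chain case would also fail. The set $\{h:\exists\alpha,\ h(x_\alpha)\notin\da c_\alpha\}$ is a union of subbasic pointwise-open sets. It is therefore pointwise open, hence Isbell open for an Alexandrov source (Lemma~\ref{lem:alexandrov}), so it can never witness strictness.

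Your compactness justification is also wrong. An Alexandrov space is compact iff it is the upper set of finitely many points, so having a top is irrelevant; for instance, the capped star $S_I$ with $I$ infinite is not compact.

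The chain alone cannot serve as the target for compact sources, since $\SIcpt(W)$ holds. The paper instead uses the two-root fan $X_I$, which is compact because $\up p\cup\up q=X_I$, together with the compact elements $a,b$ of $K(W)$. These make $W^I$ a retract of the piece $\{f:f(p)\geq a,\ f(q)\geq b\}$ of $C(X_I,K(W))$, which is open in both topologies (Proposition~\ref{prop:fan-transfer}).

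The separation on $W^I$ itself (Theorem~\ref{thm:power-separation}) requires $I=\kappa^+$ and injections $r_j:j\to\kappa$. It also needs a Scott-closed union $F=\bigcup_{w<\top}F_w$ whose closure under directed suprema rests on the least index $\mu(f)=\min A(f)$. Your sketch anticipates none of this.
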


\begin{theorem}\label{thm:main-rw}
Let $L$ be a nonempty continuous dcpo. Then $\SIrw(L)$ holds if and only if $L$ is a
pointed continuous $L$-domain.
\end{theorem}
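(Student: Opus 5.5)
The statement is Theorem~\ref{thm:main-rw}: $\SIrw(L)$ holds if and only if $L$ is a pointed continuous $L$-domain. I would prove the two implications separately, following the strategy announced in the abstract.

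\emph{Sufficiency.} Assume $L$ is a pointed continuous $L$-domain. The plan is to reduce to the bicomplete case, or to redo Xi's argument directly, since that argument never really used bicompleteness. Fix an RW-space $X$ and a Scott open set $\mathcal U\subseteq C(X,L)$ containing some $f$. Because $L$ is continuous and $X$ is core-compact, $f$ is the directed supremum of finite joins of step maps $\up$-shaped on open sets $V\ll U$, with values $b\ll f(x)$. Pointedness makes the constant-bottom map available as the base of each join. The $L$-domain property says every principal ideal ${\da}\,f(x)$ is a complete lattice. This is exactly what is needed so that the finite joins of step functions below $f$ exist and are continuous; they are computed inside the local lattices. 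Hence some finite approximant $g\ll f$ lies in $\mathcal U$. The set $\{h : g\le h\}$ need not be Isbell open. However, the RW condition (open sets are well-filtered / a suitable compactness of $\mathcal O(X)$) lets us replace $g$ by an Isbell-open neighbourhood $\bigcap_i N(\mathcal H_i,\up b_i)$ of $f$ contained in $\mathcal U$. Here $\mathcal H_i$ are Scott open filters of $\OO(X)$ witnessing $V_i\ll U_i$.

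\emph{Necessity.} Assume $\SIrw(L)$ holds. I would show in turn that $L$ is pointed and that $L$ is an $L$-domain.

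For pointedness, take $X$ to be an Alexandrov space, which is an RW-space. Suppose $L$ has no least element. Then I would pick a suitable test space, for instance the empty space or a two-point antichain with a bottom added, such that coincidence forces a least element. More plausibly, I would use the forbidden-retract theorem of Jia, Jung and Li: any failure of the target condition yields a Scott retract of $L$ onto one of a few small forbidden posets. Such a retract transfers non-coincidence from the retract to $L$, since $C(X,-)$ preserves retracts and both topologies are functorial under retractions. The task then reduces to exhibiting, for each forbidden poset $P$, an RW test space $X$ with $\Is\ne\sigma$ on $C(X,P)$.

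For the $L$-domain property, the forbidden configurations are two points with two incomparable minimal upper bounds below a common element, and the non-pointed antichain. For the infinitary failures, where the principal ideal is not complete because a directed or downward chain has no infimum, I would use the separation theorem for powers of downward well-ordered chains. That theorem yields a Scott open set in $C(X,P)$, for $X$ an Alexandrov space built on the chain, which is not Isbell open.

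\emph{Main obstacle.} The hardest step is this last one: handling the cases where bicompleteness previously supplied the infima. Without bicompleteness, a failure of the $L$-domain property might show up only as a missing infimum along a long downward chain rather than as a finite configuration. Here I would first try to choose the chain downward well-ordered, by transfinite recursion, and then apply the separation theorem to the power indexed by the Alexandrov test space.
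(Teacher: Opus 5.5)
\paragraph{Necessity has a genuine gap.} You try to get pointedness and the $L$-domain property directly from forbidden retracts, but no available theorem supplies such retracts.

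The Jia--Jung--Li results produce $W$, $K(W)$ or $K(W)_\bot$ as a retract only when $L$ fails to be \emph{bicomplete}. They produce $\mathsf B_5^\top$ only for pointed bicomplete sober dcpos that are not $L$-dcpos. Nothing yields a retract witnessing non-pointedness.

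Your list of configurations is also not exhaustive. Take $\{a,b,c\}$ with $a,b<c$: it is a finite, hence bicomplete, continuous dcpo that is not pointed. It has neither of your configurations as a retract. A monotone map from this connected poset into an antichain is constant, so the two-element antichain is not a retract of it. It also has fewer elements than your other configuration.

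Your ``main obstacle'' step has the same problem. Producing, by transfinite recursion, a downward well-ordered chain without an infimum does not suffice. The separation of Theorem~\ref{thm:power-separation} reaches $C(X,L)$ only through a Scott-continuous retraction of $L$ onto $W$, $K(W)$ or $K(W)_\bot$. Constructing that retraction is exactly the content of Theorem~\ref{thm:JJL-bicomplete}(i).

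The missing idea is to use bicompleteness as the intermediate property. A continuous dcpo is sober and meet continuous. So once $\SIrw$ is known to fail for the three obstructions, Theorem~\ref{thm:JJL-bicomplete}(i) and Proposition~\ref{prop:target-inheritance}(i) show that $\SIrw(L)$ forces $L$ to be bicomplete. Xi's bicomplete classification, Theorem~\ref{thm:xi-rw}, then gives pointedness and the $L$-domain property together.

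\paragraph{Your test spaces are not justified.} It is false that every Alexandrov space is an RW-space. In the two-root fan $X_I$ with $I$ infinite, $\up p\ll\up p$ and $\up q\ll\up q$. Any decomposition of $\up p\cap\up q=\{d_i:i\in I\}$ that is relatively maximal to $\up p\cap\up q$ must consist of singletons: test it against the decomposition into singletons. That decomposition is infinite, so $X_I$ fails property RW.

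This is why the paper caps its detectors with a greatest point $t$. The resulting spaces $S_I$ and $F_I$ are connected, hence RW-spaces by Kou--Luo. The power separation is carried over by the restriction map $C(Y,R)\to C(Z,R)$ with section given by $s(f)(t)=\top_R$. This uses that $W$, $K(W)$ and $K(W)_\bot$ all have a greatest element.

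\paragraph{Sufficiency.} Your direct sketch, including its description of property RW, is too vague to count as a proof. The reduction you mention but do not carry out is immediate: an $L$-domain is bicomplete. For a filtered $F$ and $d_0\in F$, the coinitial part $F\cap\da d_0$ has an infimum in the complete lattice $\da d_0$, and this is $\inf F$ in $L$. Theorem~\ref{thm:xi-rw} then applies.
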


The three proofs are based on a common mechanism. Jia, Jung and Li showed
that non-bicompleteness of a meet-continuous sober dcpo is witnessed by one
of three standard Scott-continuous retracts built from a downward
well-ordered chain without a bottom element \cite{JiaJungLi2019}. We first
prove that a suitable power of every such chain has Scott topology strictly
finer than the product of the coordinate Scott topologies. Discrete and
Alexandrov source spaces then transfer this separation to function spaces.
For compact sources the argument is applied after adjoining a new bottom
element, and two finite retracts eliminate the remaining failures of bounded
completeness. For RW-spaces we use capped Alexandrov detectors and restriction--extension
retractions.

The paper is organized as follows. Section~\ref{sec:preliminaries} collects
the definitions, standard tools, and published results used later.
Section~\ref{sec:separation-detectors} establishes the power separation and
its transfer to Alexandrov function spaces. Section~\ref{sec:classifications}
proves the three main classifications. 

\section{Preliminaries}
\label{sec:preliminaries}

\subsection{Ordered and topological structures}

A subset $D$ of a poset is \emph{directed} if every two elements of $D$ have an upper bound
in $D$, and it is \emph{filtered} if every two elements have a lower bound in $D$. All directed and filtered subsets in this paper are nonempty. 
A \emph{dcpo} is a poset in which every directed subset has a supremum.
For a subset $A$ of a poset $P$, write
\[
  \up A=\{y:\exists x\in A,\ x\leq y\},
  \qquad
  \da A=\{y:\exists x\in A,\ y\leq x\},
\]
and use $\up x$ and $\da x$ for singletons. The order dual of
$P$ is denoted by $P^{\mathrm{op}}$, and a chain is a set of pairwise
comparable elements.

For $u,x$ in a dcpo $P$, one writes $u\ll x$ if, whenever $D\subseteq P$ is
directed and $x\leq\sup D$, some $d\in D$ satisfies $u\leq d$. An element
$k$ is \emph{compact} if $k\ll k$. The dcpo $P$ is \emph{continuous} if
$\{u:u\ll x\}$ is directed with supremum $x$ for every $x\in P$, and it is
\emph{algebraic} if the compact elements below each $x$ form such a directed
set.

A subset $U\subseteq P$ is \emph{Scott open} if it is upward closed and
inaccessible by directed suprema. The Scott-open sets form the Scott topology
$\sigma(P)$, and $\Sigma P=(P,\sigma(P))$ denotes the Scott space. A set is
Scott closed precisely when it is a lower set closed under directed suprema. A map
between dcpos is \emph{Scott continuous} if it is monotone and preserves
directed suprema. A dcpo is \emph{pointed} if it has a least element $\bot$.
The lifting \(P_\bot\) of \(P\) is obtained by adjoining a new least element to \(P\).

A dcpo is \emph{bicomplete} if every filtered subset has an infimum. It is
\emph{bounded complete} if every subset having an upper bound, including the empty subset, has a supremum, and it is \emph{conditionally bounded
complete} if this requirement is imposed only on nonempty subsets. A dcpo $P$ is an \emph{$L$-dcpo} if every principal ideal $\da x$ is a complete lattice, and a continuous $L$-dcpo is called an \emph{$L$-domain}.

A dcpo $R$ is a \emph{Scott-continuous retract} of a dcpo $P$ if there are
Scott-continuous maps $s:R\to P$ and $r:P\to R$ such that
$r\circ s=\id_R$. If $U$ is a Scott-open subset of a dcpo $P$, we regard $U$ as an ordered
subspace of $P$ with the inherited order. It is itself a dcpo: if
$D\subseteq U$ is directed, then its supremum computed in $P$ lies in $U$ and is also its supremum there.
Its intrinsic Scott topology is the subspace topology inherited from
$\sigma(P)$.

For a topological space $X$, write $\OO(X)$ for its open-set lattice ordered
by inclusion. The space is \emph{core compact} if $\OO(X)$ is a continuous
dcpo. A subset is \emph{saturated} if it is an intersection of open sets,
and $X$ is \emph{coherent} if the intersection of any two compact saturated
subsets is compact. The specialization preorder is denoted by $\sqsubseteq_X$ and is defined by
\[
  x\sqsubseteq_X y
  \quad\Longleftrightarrow\quad
  \text{every open neighbourhood of }x\text{ contains }y.
\]

For a dcpo $P$, let $C(X,P)$ be the set of continuous maps
$X\to\Sigma P$, ordered pointwise. Directed suprema in $C(X,P)$ are computed pointwise \cite[Proposition~II-3.15(i)]{GierzEtAl2003}. The same argument applies to arbitrary $X$: for a directed $\mathcal D\subseteq C(X,P)$, its pointwise supremum satisfies $(\sup\mathcal D)^{-1}(V)=\bigcup_{f\in\mathcal D}f^{-1}(V)$ for every $V\in\sigma(P)$, and is therefore continuous. The \emph{pointwise topology} $\tpt(C(X,P))$ is generated by the sets
\[
  \bigcap_{i=1}^{n}
  \{f\in C(X,P):f(x_i)\in V_i\},
\]
where $x_1,\ldots,x_n\in X$ and $V_1,\ldots,V_n\in\sigma(P)$. 
For a Scott-open family $\mathcal H\subseteq\OO(X)$ and
$V\in\sigma(P)$, put
\begin{equation}\label{eq:isbell-subbasic}
  [\mathcal H,V]
  =\{f\in C(X,P):f^{-1}(V)\in\mathcal H\}.
\end{equation}
These sets generate the \emph{Isbell topology}
$\Is(C(X,P))$. For a set $I$, the power $P^I$ is ordered pointwise and
\[
  \tprod(P^I)=\prod_{i\in I}\sigma(P)
\]
denotes the product of the coordinate Scott topologies.

\subsection{Basic function-space properties}

Postcomposition is continuous for both function-space topologies. A map between function spaces is called \emph{Isbell continuous} if it is continuous when both spaces carry their Isbell topologies.

\begin{lemma}\label{lem:basic-target-tools}
Let $h:P\to Q$ be Scott continuous. For every topological space $X$, the map
\[
  h_*:C(X,P)\to C(X,Q),\qquad h_*(f)=h\circ f,
\]
is both Scott continuous and Isbell continuous.
\end{lemma}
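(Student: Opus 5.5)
First I check that $h_*$ is well defined: $h$ is Scott continuous, so it is continuous as a map $\Sigma P\to\Sigma Q$. Hence $h\circ f$ is continuous from $X$ to $\Sigma Q$ whenever $f\in C(X,P)$. Monotonicity of $h_*$ with respect to the pointwise orders follows from the monotonicity of $h$.

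\emph{Scott continuity.} Since $h_*$ is monotone, it is enough to show that it preserves directed suprema. Let $\mathcal D\subseteq C(X,P)$ be directed. Directed suprema in $C(X,P)$ and $C(X,Q)$ are computed pointwise, as recalled in the preliminaries for arbitrary $X$. So for each $x\in X$,
\[
  h_*(\sup\mathcal D)(x)=h\bigl(\sup_{f\in\mathcal D}f(x)\bigr)=\sup_{f\in\mathcal D}h(f(x))=\bigl(\sup h_*(\mathcal D)\bigr)(x).
\]
The middle equality holds because $\{f(x):f\in\mathcal D\}$ is directed in $P$ and $h$ preserves directed suprema. This proves Scott continuity.

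\emph{Isbell continuity.} It suffices to check preimages of the subbasic sets \eqref{eq:isbell-subbasic}. Take a Scott-open family $\mathcal H\subseteq\OO(X)$ and $W\in\sigma(Q)$, and put $V=h^{-1}(W)$, which lies in $\sigma(P)$ because $h$ is Scott continuous. For every $f\in C(X,P)$ we have $(h\circ f)^{-1}(W)=f^{-1}(V)$. Therefore
\[
  h_*^{-1}\bigl([\mathcal H,W]\bigr)=\{f\in C(X,P):f^{-1}(V)\in\mathcal H\}=[\mathcal H,V],
\]
which is Isbell open in $C(X,P)$. Preimages commute with finite intersections and unions, so $h_*$ is Isbell continuous.

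There is no real obstacle here. The one point needing care is that pointwise computation of directed suprema in $C(X,P)$ must hold for arbitrary $X$, not only core-compact $X$. This has already been justified in the preliminaries.
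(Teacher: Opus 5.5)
Your proposal is correct and follows the paper's proof essentially step for step: pointwise computation of directed suprema gives Scott continuity, and the identity $h_*^{-1}[\mathcal H,W]=[\mathcal H,h^{-1}(W)]$ on subbasic sets gives Isbell continuity.
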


\begin{proof}
Since $h$ is Scott continuous, $h\circ f$ is continuous for every
$f\in C(X,P)$, so $h_*$ is well defined.

Let $\mathcal D\subseteq C(X,P)$ be directed. Directed suprema in
$C(X,P)$ are computed pointwise, and therefore, for every $x\in X$,
\[
 h_*\Bigl(\sup \mathcal D\Bigr)(x)
 =h\Bigl(\bigl(\sup\mathcal D\bigr)(x)\Bigr)
 =h\Bigl(\sup_{f\in\mathcal D}f(x)\Bigr)
 =\sup_{f\in\mathcal D}h(f(x))
 =\Bigl(\sup_{f\in\mathcal D}h_*(f)\Bigr)(x),
\]
Hence $h_*$ preserves directed suprema and is Scott continuous.

For the Isbell topology, let $[\mathcal H,V]$ be a subbasic open set
of $C(X,Q)$, where $\mathcal H$ is a Scott-open family of open
subsets of $X$ and $V$ is Scott open in $Q$. Then
\[
 h_*^{-1}[\mathcal H,V]
 =[\mathcal H,h^{-1}(V)].
\]
Since $h$ is Scott continuous, $h^{-1}(V)$ is Scott open in $P$.
Thus $[\mathcal H,h^{-1}(V)]$ is Isbell open, and $h_*$ is Isbell
continuous.
\end{proof}

The following classical inclusion follows from \cite[Definition~II-3.12 and Lemmas~II-4.2(ii), II-4.3(i)]
{GierzEtAl2003}, and it is also used in \cite{XiLiuHoZhao2017}.

\begin{proposition}
\label{prop:isbell-below-scott}
For every topological space $X$ and every dcpo $L$,
\(
  \Is(C(X,L)) \) is coarser than \(\sigma(C(X,L)).
\)
\end{proposition}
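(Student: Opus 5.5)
It suffices to show that every subbasic Isbell open set $[\mathcal H,V]$ is Scott open in $C(X,L)$. Scott open sets are closed under finite intersections and arbitrary unions, so the whole Isbell topology is then contained in $\sigma(C(X,L))$. Fix a Scott-open family $\mathcal H\subseteq\OO(X)$ and $V\in\sigma(L)$. The argument has two steps.

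\emph{Step 1: $[\mathcal H,V]$ is an upper set.} Let $f\leq g$ pointwise and $f\in[\mathcal H,V]$. Since $V$ is an upper set, $f(x)\in V$ implies $g(x)\in V$, so $f^{-1}(V)\subseteq g^{-1}(V)$. Because $\mathcal H$ is upward closed in $\OO(X)$, we get $g^{-1}(V)\in\mathcal H$, that is, $g\in[\mathcal H,V]$.

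\emph{Step 2: $[\mathcal H,V]$ is inaccessible by directed suprema.} Let $\mathcal D\subseteq C(X,L)$ be directed with $\sup\mathcal D\in[\mathcal H,V]$. As recorded in the preliminaries, the supremum is computed pointwise and satisfies
\[
  (\sup\mathcal D)^{-1}(V)=\bigcup_{f\in\mathcal D}f^{-1}(V).
\]
This holds because $V$ is Scott open: $\sup_{f}f(x)\in V$ forces some $f(x)\in V$. The family $\{f^{-1}(V):f\in\mathcal D\}$ is directed in $\OO(X)$, since $f\mapsto f^{-1}(V)$ is monotone and $\mathcal D$ is directed. Its union is its supremum in $\OO(X)$, and that union belongs to the Scott-open family $\mathcal H$. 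Hence some $f^{-1}(V)$ lies in $\mathcal H$, so $f\in[\mathcal H,V]$ for some $f\in\mathcal D$.

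The only point needing care is the identity for $(\sup\mathcal D)^{-1}(V)$ in Step 2, namely that directed suprema in $C(X,L)$ are pointwise for arbitrary $X$. The preliminaries have already established this, so no real obstacle remains. No hypothesis on $X$ or $L$ beyond $L$ being a dcpo is used, which matches the generality of the statement.
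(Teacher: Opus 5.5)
Your proof is correct and complete. Steps 1 and 2 together show that $f\mapsto f^{-1}(V)$ is a Scott-continuous map $C(X,L)\to\OO(X)$: it is monotone, and it sends directed suprema to directed unions. Hence each subbasic set $[\mathcal H,V]$ is the preimage of the Scott-open family $\mathcal H$, and so it is Scott open. The one fact you use implicitly, that $\{f(x):f\in\mathcal D\}$ is directed in $L$, is immediate from the pointwise order.

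The paper gives no argument of its own. It obtains the inclusion by citing Definition II-3.12 and Lemmas II-4.2(ii) and II-4.3(i) of the Compendium of Gierz et al., and notes that it is also used by Xi, Liu, Ho and Zhao.

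The citation is shorter and ties the result to the classical comparison of function-space topologies. Your direct verification is self-contained and visibly valid in the full generality of the statement, namely arbitrary $X$ and arbitrary dcpo $L$. Its only nontrivial input is that directed suprema in $C(X,L)$ are pointwise, with $(\sup\mathcal D)^{-1}(V)=\bigcup_{f\in\mathcal D}f^{-1}(V)$. The paper takes care to record exactly this fact for arbitrary $X$ in its preliminaries. With your argument, the reader therefore does not need to check that the hypotheses of the cited results cover this setting.
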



The Isbell--Scott coincidence property on the function space is stable under two natural operations on the target: passing to Scott-continuous retracts and, for compact sources, passing to Scott-open sub-dcpos.

\Needspace{10\baselineskip}
\begin{proposition}\label{prop:target-inheritance}
  Let $L$ be a dcpo.
\begin{enumerate}[label=(\roman*)]
\item Let $R$ be a Scott-continuous retract of $L$. If
\(
  \operatorname{Is}(C(X,L))=\sigma(C(X,L)),
\)
then the same equality holds with $R$ in place of $L$.

\item Let $U$ be a Scott-open sub-dcpo of $L$. If the Isbell and Scott
topologies on $C(X,L)$ coincide for a compact space $X$, then they also
coincide on $C(X,U)$.

\end{enumerate}
\end{proposition}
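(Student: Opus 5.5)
For part (i), we show that $C(X,R)$ is a retract of $C(X,L)$ in a way compatible with both topologies, and then transfer the equality. Let $s:R\to L$ and $r:L\to R$ be Scott continuous with $r\circ s=\id_R$. By Lemma~\ref{lem:basic-target-tools}, $s_*:C(X,R)\to C(X,L)$ and $r_*:C(X,L)\to C(X,R)$ are both Scott continuous and Isbell continuous, and $r_*\circ s_*=\id$.

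Take $\mathcal U\in\sigma(C(X,R))$. Since $r_*$ is Scott continuous, $r_*^{-1}(\mathcal U)$ is Scott open in $C(X,L)$, hence Isbell open by hypothesis. Since $s_*$ is Isbell continuous, $s_*^{-1}(r_*^{-1}(\mathcal U))=(r_*\circ s_*)^{-1}(\mathcal U)=\mathcal U$ is Isbell open in $C(X,R)$. Thus $\sigma\subseteq\Is$ on $C(X,R)$, and Proposition~\ref{prop:isbell-below-scott} gives the reverse inclusion.

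For part (ii), the inclusion $i:U\hookrightarrow L$ is Scott continuous, since directed suprema in $U$ are computed in $L$. Hence $C(X,U)\subseteq C(X,L)$, as a $U$-valued map is continuous into $\Sigma U$ exactly when it is continuous into $\Sigma L$, because $\sigma(U)$ is the subspace topology. The key point is that, for $X$ compact, $C(X,U)=[\{X\},U]$, and $\{X\}$ is Scott open in $\OO(X)$ precisely because $X$ is compact. So $C(X,U)$ is an Isbell-open, and hence Scott-open, subset of $C(X,L)$.

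Now let $\mathcal W\in\sigma(C(X,U))$. First we check that $\mathcal W$ is Scott open in $C(X,L)$.
\begin{itemize}
\item It is an upper set there: if $f\in\mathcal W$ and $f\le g\in C(X,L)$, then $g$ takes values in the upper set $U$, so $g\in C(X,U)$ and thus $g\in\mathcal W$.
\item It is inaccessible by directed suprema: directed suprema of $C(X,L)$ landing in $C(X,U)$ are computed pointwise, and since $C(X,U)$ is Scott open the family eventually lies in it, so these suprema agree with those of $C(X,U)$.
\end{itemize}
By hypothesis $\mathcal W$ is therefore Isbell open in $C(X,L)$. Since $i_*$ is Isbell continuous by Lemma~\ref{lem:basic-target-tools}, $\mathcal W=i_*^{-1}(\mathcal W)$ is Isbell open in $C(X,U)$. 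Together with Proposition~\ref{prop:isbell-below-scott}, this gives the equality on $C(X,U)$.

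The main obstacle is the step in (ii) showing that a Scott-open subset of $C(X,U)$ remains Scott open in $C(X,L)$. This is exactly where compactness of $X$ is needed, since $C(X,U)$ must itself be open in $C(X,L)$ for the argument to go through.
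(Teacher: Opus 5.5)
Your proof is correct and follows the paper's argument. Part (i) is identical. In part (ii) you likewise use that $C(X,U)=[\{X\},U]$ is Isbell open, hence Scott open, in $C(X,L)$, so that intrinsically Scott-open subsets of $C(X,U)$ are Scott open in $C(X,L)$. The only difference is in the Isbell step: the paper checks explicitly that the intrinsic and relative Isbell topologies on $C(X,U)$ coincide, whereas you pull back along the inclusion $i_*$ using Lemma~\ref{lem:basic-target-tools} and take the other inclusion from Proposition~\ref{prop:isbell-below-scott}, which is a slightly more economical form of the same step.
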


\begin{proof}
For (i), choose Scott-continuous maps $s:R\to L$ and $r:L\to R$ with
$r\circ s=\id_R$. If $W\subseteq C(X,R)$ is Scott open, then
$r_*^{-1}(W)$ is Scott open in $C(X,L)$ and hence Isbell open under the
hypothesis. Lemma~\ref{lem:basic-target-tools} gives
\[
  W=s_*^{-1}\bigl(r_*^{-1}(W)\bigr),
\]
so $W$ is Isbell open. Proposition~\ref{prop:isbell-below-scott} gives the
reverse inclusion.

For (ii), compactness makes $X$ a compact element of $\OO(X)$, so
$\{X\}$ is Scott open and
\[
  C(X,U)=[\{X\},U]
\]
is both Isbell open and Scott open in $C(X,L)$. Its intrinsic Scott topology is therefore the relative Scott topology. If $V$ is intrinsically Scott open in $U$, then
$V=U\cap V'$ for some $V'\in\sigma(L)$, and
\[
  [\mathcal H,V]_{C(X,U)}
  =C(X,U)\cap[\mathcal H,V']_{C(X,L)}.
\]
Conversely, the restriction of $[\mathcal H,V']$ to $C(X,U)$ is
$[\mathcal H,U\cap V']_{C(X,U)}$. Thus the intrinsic Isbell topology is the relative Isbell topology, and coincidence restricts to $C(X,U)$.
\end{proof}

For a poset $P$, the \emph{Alexandrov topology} consists of all upper subsets, which is  generated by
the principal upper sets $\up x$, $x\in P$, and the corresponding space is called an Alexandrov space.  A map from an Alexandrov space to a Scott space is continuous exactly when it is monotone.

Xi, Xu and Lawson proved that the pointwise, compact-open and Isbell
topologies on $C(X,Y)$ coincide for every topological space $Y$
whenever $X$ is a quasicontinuous space \cite[Lemma~5.3]{XiXuLawson2016}.
Every Alexandrov space is quasicontinuous and core compact. Thus we have the following immediate consequence.

\begin{lemma}\label{lem:alexandrov}
If $X$ is an Alexandrov space and $L$ is a dcpo, then $X$ is core compact and
\[
  \Is(C(X,L))=\tpt(C(X,L)).
\]
\end{lemma}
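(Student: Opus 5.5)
The statement has two parts: that an Alexandrov space is core compact, and that the Isbell topology on \(C(X,L)\) equals the pointwise topology. I plan to check both directly, rather than relying only on the citation of \cite[Lemma~5.3]{XiXuLawson2016}.

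\emph{Core compactness.} Let \(X\) be the Alexandrov space of a poset \(P\). For each \(x\in X\), the set \(\up x\) is the smallest open set containing \(x\). Hence for every open \(U\) we have \(U=\bigcup_{x\in U}\up x\), and \(\up x\ll U\) in \(\OO(X)\) whenever \(x\in U\). Indeed, if \(U\subseteq\bigcup\mathcal D\) for a directed family \(\mathcal D\), then \(x\) lies in some member \(D\in\mathcal D\), and since \(D\) is an upper set, \(\up x\subseteq D\). Finite unions of such sets \(\up x\) with \(x\in U\) are still way below \(U\), and they form a directed family with union \(U\). Therefore \(\OO(X)\) is continuous, i.e. \(X\) is core compact.

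\emph{Pointwise open implies Isbell open.} Fix \(x\in X\) and \(V\in\sigma(L)\). Set \(\mathcal H_x=\{W\in\OO(X):x\in W\}\). This family is Scott open: it is an upper set, and \(x\in\bigcup\mathcal D\) forces \(x\) into some member of \(\mathcal D\). Then
\[
  \{f:f(x)\in V\}=[\mathcal H_x,V],
\]
so every subbasic pointwise open set is Isbell open. This inclusion holds for every space \(X\).

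\emph{Isbell open implies pointwise open.} This is the step that needs care. Take \(f\in[\mathcal H,V]\), so \(W:=f^{-1}(V)\in\mathcal H\). Write \(W\) as the directed union of the sets \(\up F\) with \(F\subseteq W\) finite. Because \(\mathcal H\) is Scott open, some \(\up F\) with \(F\) finite and \(F\subseteq W\) already belongs to \(\mathcal H\). The set \(N=\bigcap_{y\in F}\{g:g(y)\in V\}\) is pointwise open and contains \(f\). For any \(g\in N\), the preimage \(g^{-1}(V)\) is open and contains \(F\), hence contains \(\up F\). Since \(\mathcal H\) is an upper set, \(g^{-1}(V)\in\mathcal H\). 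Thus \(N\subseteq[\mathcal H,V]\).

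It follows that every subbasic Isbell open set is pointwise open, and hence the two topologies on \(C(X,L)\) coincide. The only real obstacle was this last step, and it is handled by the finite-generation of open sets in an Alexandrov space.
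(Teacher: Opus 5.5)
Your proof is correct, but it takes a different route from the paper. The paper gives no argument of its own. It notes that Alexandrov spaces are quasicontinuous and core compact, and then quotes Lemma~5.3 of Xi, Xu and Lawson. That lemma says the pointwise, compact-open and Isbell topologies on $C(X,Y)$ agree for every quasicontinuous $X$ and every space $Y$.

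You instead verify both claims directly. For core compactness, you show that the finite unions of sets $\up x$ with $x\in U$ are a directed family of elements way below $U$ whose union is $U$. Since $\{V:V\ll U\}$ is closed under finite unions in the complete lattice $\OO(X)$, this is exactly continuity of $\OO(X)$.

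For $\Is(C(X,L))\subseteq\tpt(C(X,L))$, your key step writes $f^{-1}(V)$ as the directed union of the open sets $\up F$ with $F$ finite, and uses Scott openness of $\mathcal H$ to land in a single $\up F$. This is the natural Alexandrov specialization of the quasicontinuous argument. There one would use the directed family of interiors of the sets $\up F$. In the Alexandrov case $\up F$ is already open, so no interiors are needed. The degenerate case $f^{-1}(V)=\varnothing$ is covered because you allow $F=\varnothing$.

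Your route makes the lemma self-contained. It also shows that nothing about $L$ is used beyond $V$ being open, so the equality holds for any target space. The citation gives more generality, namely quasicontinuous sources and the compact-open topology, at no cost in length. The paper only needs the Alexandrov case, so your direct proof fully suffices.
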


A \emph{decomposition} of a nonempty open set $V$ is a family of pairwise
disjoint nonempty open sets whose union is $V$. Let $V\subseteq U$ be
nonempty open sets. A decomposition $\mathcal A$ of $V$ is
\emph{relatively maximal to $U$} if, for every decomposition $\mathcal B$
of $U$ and every $B\in\mathcal B$ meeting $V$,
\[
  B\cap V
  =\bigcup\{A\in\mathcal A:A\cap B\neq\varnothing\}.
\]
A topological space $X$ has \emph{property RW} if, whenever
$V_i\ll U_i$ in $\OO(X)$ for $1\leq i\leq k$, where $k\geq1$, and
$\bigcap_{i=1}^{k}V_i\neq\varnothing$, the open set
$\bigcap_{i=1}^{k}V_i$ has a finite decomposition relatively maximal to
$\bigcap_{i=1}^{k}U_i$. A core-compact space with property RW is called an
\emph{RW-space} \cite[Definition~2.4]{KouLuo2003}.

\subsection{Retract forbidden structures and bicomplete classifications}

A nonempty closed set is \emph{irreducible} if it is not the union of two proper
closed subsets. 
A $T_0$ space is \emph{sober} if every nonempty irreducible
closed subset is the closure of a unique point. A dcpo is called sober when its Scott
space is sober.
A dcpo $P$ is \emph{meet continuous} if, whenever $D\subseteq P$ is
directed and $x\leq\sup D$, every Scott-open neighbourhood of $x$ meets
$\da x\cap\da D$. Every continuous dcpo is sober and meet
continuous \cite[Proposition~III-3.7(i), Theorem~III-2.11]{GierzEtAl2003}.

The starting point for the first two classifications is the following theorem.

\begin{theorem}[{\cite{XiYang2014}}]\label{thm:xi-yang}
Let $L$ be a nonempty bicomplete continuous dcpo.
\begin{enumerate}[label=(\roman*)]
\item $\SI(L)$ holds if and only if $L$ is bounded complete.
\item $\SIcpt(L)$ holds if and only if $L$ is conditionally bounded complete.
\end{enumerate}
\end{theorem}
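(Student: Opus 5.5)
The plan is to prove the two implications separately in each part, treating sufficiency as a domain-theoretic fact about function spaces and necessity as the construction of explicit test spaces.

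\emph{Sufficiency in (i).} Let $L$ be a bounded complete continuous dcpo, so a pointed one, and let $X$ be core compact. Then $C(X,L)$ is again a bounded complete continuous dcpo. A basis is given by finite suprema of step functions $U\searrow u$ with $V\ll U$ in $\OO(X)$ and $u\ll v$; such finite suprema exist because $L$ is bounded complete and pointed. For each basic approximant I would show that $\up(U\searrow u)$-type Scott neighbourhoods contain Isbell-open sets of the form $[\{W:V\ll W\},\,\up u]$ with $\up u$ replaced by a Scott-open set between $u$ and $v$. From this, every Scott-open set is a union of finite intersections of Isbell subbasic sets, and Proposition~\ref{prop:isbell-below-scott} supplies the other inclusion. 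This is the classical argument of \cite[Section~II-4]{GierzEtAl2003}.

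\emph{Sufficiency in (ii).} For compact $X$ the same argument works for conditionally bounded complete $L$. Compactness is what makes it work: the constant maps together with the finite suprema of step functions that lie above some constant map form a basis. The would-be empty supremum is replaced by the constant map at a lower bound, which exists because $f(X)$ is a compact image. Alternatively, apply (i) to the lifting $L_\bot$ and restrict to the Scott-open sub-dcpo $L=L_\bot\setminus\{\bot\}$ via Proposition~\ref{prop:target-inheritance}(ii).

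\emph{Necessity.} Here I would use Alexandrov test spaces, for which Lemma~\ref{lem:alexandrov} reduces the Isbell topology to the pointwise topology. It then suffices to exhibit a Scott-open subset of $C(X,L)$ that is not pointwise open. Failure of bounded completeness splits into two cases.
\begin{enumerate}[label=(\alph*)]
\item \emph{$L$ has no least element.} Take $X=\mathbb N$ discrete, which is core compact but not compact, so $C(X,L)=L^{\mathbb N}$. Bicompleteness gives that every filtered set has an infimum, so $L$ is not filtered. Choose pairwise incompatible-below data, pick $u_n\ll x_n$, and form a Scott-open set of sequences requiring some coordinate to be high. Such a set cannot contain a basic product neighbourhood, because any neighbourhood constrains only finitely many coordinates.
\item \emph{Some nonempty bounded pair has no supremum.} Using continuity and bicompleteness, take filtered infima of upper bounds to obtain $a,b$ with two distinct minimal upper bounds $m_1,m_2$ above them. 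Take $X$ to be a finite Alexandrov space, for instance a ``V'' or ``N''-shaped poset, which is compact. Monotone maps $X\to L$ that send the bottom points to $a$ and $b$ must then choose between $m_1$ and $m_2$ on the top point. This yields a Scott-open set that splits along the two branches and is not open for the pointwise topology.
\end{enumerate}
Case (b) proves necessity in (ii). For (i), cases (a) and (b) together exhaust the failure of bounded completeness.

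\emph{Main obstacle.} I expect case (b) to be the hardest step: producing the minimal upper bounds. This is exactly where bicompleteness enters. A maximal chain of upper bounds has a filtered infimum, and continuity ensures that this infimum is still an upper bound. Once the minimal upper bounds are in hand, verifying that the constructed set is Scott open but not pointwise open should be a routine $\ll$-approximation argument.
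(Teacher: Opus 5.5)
The paper does not prove this statement. It is imported from Xi and Yang and used as a black box, so your proposal has to stand on its own.

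\textbf{Sufficiency.} This half is acceptable in outline. For (ii), your second route (lift to $L_\bot$, apply (i), restrict via Proposition~\ref{prop:target-inheritance}(ii)) is exactly what the paper does for Theorem~\ref{thm:main-compact}. Your first route is wrong as stated: for a two-element antichain $L=\{x,y\}$ and a two-point discrete $X$, the map with image $\{x,y\}$ has compact image but lies above no constant map.

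\textbf{Case (b) of necessity.} There is a genuine gap here, and the case cannot work as designed, because \emph{a finite source never separates the two topologies when $L$ is continuous}. For finite $X$, the elements of $C(X,L)$ are the monotone maps. Choosing values inductively along the finite specialization order shows that the monotone $g$ with $g(x)\ll f(x)$ for all $x$ form a directed set with supremum $f$. For each such $g$, the pointwise-open set $\bigcap_{x\in X}\{h: g(x)\ll h(x)\}$ contains $f$ and lies in $\up g$. Hence every Scott-open subset of $C(X,L)$ is pointwise open, i.e.\ Isbell open by Lemma~\ref{lem:alexandrov}. This is why even the finite targets $\mathsf B_4$ and $\mathsf B_4^\top$ need the infinite fan $X_{\mathbb N}$ in Corollary~\ref{cor:finite-detectors}. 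The dichotomy you rely on is also false: the top point may be sent to any common upper bound of the two images, for instance anything above $m_1$.

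\textbf{Case (a) of necessity.} This is not right as written either. A set requiring \emph{some} coordinate to be high is $\bigcup_n\pi_n^{-1}(V_n)$, a union of subbasic product-open sets, hence product open. A condition on infinitely many coordinates, on the other hand, is generally not Scott open. The case is easily repaired. Bicompleteness and Zorn's lemma give two distinct minimal elements $m_1\neq m_2$, since a unique one would be least. The sequences equal to $m_1$ except for the value $m_2$ at one coordinate form an antichain of minimal elements of $L^{\mathbb N}$. This antichain is a Scott-closed set whose product closure contains the constant sequence $m_1$, which it omits.

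\textbf{Repairing (b).} The same antichain trick inside $M=\up u\cap\up v$ (bicomplete, without least element), combined with Proposition~\ref{prop:fan-transfer} on $X_{\mathbb N}$, repairs case (b) when the bounded pair without supremum consists of compact elements $u,v$. For non-compact $a,b$ this breaks down. The lower closure of the maps sending $p,q$ to $a,b$, one $d_k$ to $m_2$ and the other $d_i$ to $m_1$ need not be Scott closed. Moreover, compact witnesses need not exist: in the continuous $L$-domain obtained from $\mathsf B_5$ by replacing $u,v$ with two incomparable copies of the chain $(0,1]$, the only compact elements are $\bot,a,b$, and every bounded pair of them has a supremum.

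\textbf{What is missing.} You need a way around non-compactness. The paper's proof of the necessity half of Theorem~\ref{thm:main-compact} does this by passing, via Theorems~\ref{thm:JJL-nonL} and~\ref{thm:AJ-L-not-BC}, to the finite retracts $\mathsf B_5^\top$ and $\mathsf B_5$ of $L_\bot$, whose elements are compact. That is where the real difficulty lies. Producing minimal upper bounds, which you flag as the main obstacle, needs only Zorn's lemma and bicompleteness: an infimum of upper bounds of $\{a,b\}$ is trivially an upper bound, and continuity plays no role.
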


For RW-spaces we use the following bicomplete classification. 

\begin{theorem}[{\cite{Xi2005}}]\label{thm:xi-rw}
Let $L$ be a nonempty bicomplete continuous dcpo. Then $\SIrw(L)$ holds if and only
if $L$ is a pointed continuous $L$-domain.
\end{theorem}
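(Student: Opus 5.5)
Since Theorem~\ref{thm:xi-rw} is quoted from \cite{Xi2005}, the following only sketches how I would reprove it; the rest of the paper relies on the statement as published. I would prove the two implications separately. The sufficiency direction is a basis argument for $C(X,L)$. The necessity direction tests $\SIrw(L)$ on small Alexandrov (hence core-compact) spaces that are easily seen to have property RW, using Lemma~\ref{lem:alexandrov}.

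\emph{Sufficiency.} Let $L$ be a pointed continuous $L$-domain and $X$ an RW-space. By Proposition~\ref{prop:isbell-below-scott} it suffices to show that every Scott-open $\mathcal W\subseteq C(X,L)$ is Isbell open.

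\begin{enumerate}[label=(\arabic*)]
\item Since $L$ is pointed, for $V\in\OO(X)$ and $a\in L$ the step function $(V\searrow a)$, equal to $a$ on $V$ and $\bot$ elsewhere, lies in $C(X,L)$.
\item Take finitely many step functions $(V_i\searrow a_i)$ with $V_i\ll U_i$ and $a_i\ll b_i$, all bounded above by some $f\in C(X,L)$. I would compute their pointwise supremum inside the complete lattices $\da f(x)$.
\item I would then use property RW to show that this pointwise supremum is continuous. On each member of a finite decomposition of $\bigcap_{i\in J}V_i$ relatively maximal to $\bigcap_{i\in J}U_i$, the local suprema are controlled. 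This works because in an $L$-domain the supremum of a bounded finite set does not depend on the chosen upper bound once the elements lie in a common principal ideal.
\item Steps (2) and (3) show that such finite suprema form a basis of $C(X,L)$ and that every $f$ is the directed supremum of the suprema below it.
\item Given $f\in\mathcal W$, pick such a supremum $g=\sup_i(V_i\searrow a_i)$ in $\mathcal W$. Consider the Isbell neighbourhood
\[
  \mathcal N=\bigcap_i[\mathcal H_i,\mathord{\uparrow\!\!\uparrow} a_i],
  \qquad
  \mathcal H_i=\{O\in\OO(X):V_i\ll O\}.
\]
Each $\mathcal H_i$ is Scott open because $\OO(X)$ is continuous. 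For $h\in\mathcal N$ we have $h\ge(V_i\searrow a_i)$ for each $i$, so $h\ge g$ by the same $L$-domain argument. Hence $h\in\mathcal W$, since $\mathcal W$ is an upper set.
\end{enumerate}

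Arranging the approximations so that $f$ itself lies in $\mathcal N$ uses $V_i\ll U_i$ together with $a_i\ll b_i$. This is where interpolation in $L$ and in $\OO(X)$ enters.

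\emph{Necessity.} Assume $\SIrw(L)$.

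\begin{enumerate}[label=(\arabic*)]
\item \emph{Bounded completeness of pairs below a common bound.} Take $X$ to be a finite Alexandrov space shaped like a ``V'' or a three-element antichain with a top. On such a space $\Is=\tpt$ by Lemma~\ref{lem:alexandrov}. Coincidence with $\sigma$ then forces existence and uniqueness of suprema of bounded pairs inside each $\da x$. I would run the argument in the style of Xi--Yang \cite{XiYang2014}: if $a,b\le c$ had two minimal upper bounds below $c$, one constructs a Scott-open set of monotone maps that is not pointwise open.
\item \emph{Completeness of $\da x$.} Bicompleteness enters here. Directed and filtered infima exist, so suprema of bounded pairs extend to arbitrary bounded subsets of $\da x$. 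The empty supremum, namely the bottom of $\da x$, comes from filtered infima.
\item \emph{Pointedness.} Use a test space whose open-set lattice makes $[\{X\},\cdot]$ behave like a non-compact source, for example a discrete or Alexandrov space with a non-compact part that is still RW. If $L$ had no least element, the constant-function region would fail to be Isbell open.
\end{enumerate}

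\emph{Main obstacle.} I expect the hardest step to be step (3) of sufficiency: continuity of finite suprema of step functions on an RW-space. This is exactly where the relatively maximal decompositions must be matched with the local suprema of the $L$-domain. I would first try to reduce it to the case where all $V_i$ share one connected-like piece of the decomposition, and then glue the pieces.
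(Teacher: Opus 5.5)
The paper gives no proof of this theorem; it is quoted from Xi's 2005 paper and used as a black box. Your sketch therefore has to stand on its own, and it has two genuine gaps.

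\textbf{Sufficiency, step (5).} This step is false whenever $L$ is not bounded complete. From $h\geq(V_i\searrow a_i)$ for all $i$ you cannot conclude $h\geq g$. The value $g(x)$ is the supremum computed in $\da f(x)$, and in an $L$-domain $\sup_{\da c}A$ depends on the upper bound $c$: it is the unique minimal upper bound of $A$ below $c$. Your ``independence'' holds only for upper bounds that have a common upper bound. For a concrete failure, take $X$ a point, $L=\mathsf B_5$, $f\equiv a$ and step values $u,v$. Then $g\equiv a$ and $\mathcal W=\up g$ is Scott open, yet your $\mathcal N=\{h:h\geq u,\ h\geq v\}$ contains $h\equiv b\notin\mathcal W$. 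The neighbourhood must also fix which minimal upper bound $h$ selects, and this is where RW is needed.

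RW is not needed for the continuity of $g$, which you flagged as the main obstacle; that holds for every space. If $g(x)=s$, then $\bigcap_{i\in J_x}V_i\cap f^{-1}(\mathord{\uparrow\!\!\uparrow}s)$ is an open neighbourhood of $x$ on which $g\geq s$, because $s\ll\sup_{\da f(x)}\{b_i:i\in J_x\}\leq f(x)$.

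A repair runs as follows.
\begin{itemize}
\item Arrange $f\geq b_i$ on some $U_i'\gg U_i$, and use $[\{O:U_i\ll O\},\mathord{\uparrow\!\!\uparrow}a_i]$ in place of your $\mathcal H_i$.
\item For each $J$ with $V_J=\bigcap_{i\in J}V_i\neq\varnothing$, take a finite decomposition $\mathcal A_J$ of $V_J$ relatively maximal to $U_J=\bigcap_{i\in J}U_i$.
\item Every such $h$, and $f$ itself, satisfies $h(x)\gg a_i$ for $x\in U_J$ and $i\in J$. Hence the sets $U_J\cap h^{-1}(\mathord{\uparrow\!\!\uparrow}s)$, with $s$ a minimal upper bound of $\{a_i:i\in J\}$, decompose $U_J$.
\item Relative maximality puts each $A\in\mathcal A_J$ inside one of these pieces. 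So the $f$-relative supremum of $\{a_i:i\in J\}$ is a constant $s_A$ on $A$.
\item The extra subbasic sets $[\{O:O\cap A\neq\varnothing\},\mathord{\uparrow\!\!\uparrow}s_A]$ contain $f$ and force $h\geq s_A$ on $A$. Taking $J=J_x$ for each $x$ gives $h\geq g$.
\end{itemize}

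\textbf{Necessity, step (1).} This step cannot work with finite test spaces. If $X$ and $L$ are finite, every $\up f=\bigcap_{x\in X}\{h:h(x)\in\up f(x)\}$ is pointwise open, so $\Is=\tpt=\sigma$ on $C(X,L)$. Consequently $\mathsf B_5^\top$, which is pointed, bicomplete and continuous but not an $L$-domain, passes every finite test. Infinite sources are unavoidable. Take the capped fan $F_{\mathbb N}$ and the restriction retraction of Proposition~\ref{prop:rw-detectors} onto $C(X_{\mathbb N},\mathsf B_5^\top)$. Combined with the separation of $M^{\mathbb N}$ for $M=\{a,b,\top\}$, as in Corollary~\ref{cor:finite-detectors}, this shows that $\SIrw(\mathsf B_5^\top)$ fails. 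Theorem~\ref{thm:JJL-nonL} then transfers the failure to every pointed bicomplete continuous dcpo that is not an $L$-dcpo.

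Your step (3) can be made precise in the same spirit. If a bicomplete $L$ has no bottom, it is not filtered, so some $x,y$ satisfy $\da x\cap\da y=\varnothing$. Take an infinite discrete source $I$, which is an RW-space. The maps in $(\da x\cup\da y)^I$ with only finitely many values in $\da y$ form a Scott-closed set, and the constant map $y$ lies in its product closure.

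Finally, in step (2) the bottom of $\da x$ comes from pointedness, not from filtered infima. Binary suprema extend to arbitrary nonempty ones through directed suprema.
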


We next introduce the standard retract obstructions. A nonempty chain $W$ is
\emph{downward well ordered} if every nonempty subset has a greatest element.
For such a chain without a bottom element, define
\[
  K(W)=W\cup\{a,b\},
\]
where $a$ and $b$ are incomparable and lie below every element of $W$; let
$K(W)_\bot$ be its lifting. The following theorem is the main result of Jia, Jung and Li. It is the key to removing the bicompleteness assumption from the three classifications.

\begin{theorem}[{\cite[Corollaries~4.3 and~4.4]{JiaJungLi2019}}]
\label{thm:JJL-bicomplete}
\begin{enumerate}[label=(\roman*)]
\item If a meet-continuous sober dcpo $L$ is not bicomplete, then one of
$W$, $K(W)$, and $K(W)_\bot$ is a Scott-continuous retract of $L$ for some
downward well-ordered chain $W$ without a bottom element.
\item If a pointed sober dcpo $P$ is not bicomplete, then $K(W)_\bot$ is a
Scott-continuous retract of $P$ for some such $W$.
\end{enumerate}
\end{theorem}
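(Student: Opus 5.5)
Although Theorem~\ref{thm:JJL-bicomplete} is quoted from \cite{JiaJungLi2019}, here is how I would reconstruct it. The idea is to turn a filtered set without infimum into a canonical chain $W$. Then I would read off which of $W$, $K(W)$ or $K(W)_\bot$ appears from how the complement of the open set generated by $W$ decomposes.

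\textbf{Step 1: reduce to a well-ordered chain.} Suppose $L$ is not bicomplete, and pick a filtered $F\subseteq L$ with no infimum. By the order-dual of Iwamura's lemma (equivalently, Markowsky's chain theorem), a poset in which every downward well-ordered chain has an infimum has infima of all filtered sets. So some downward well-ordered chain $W$ has no infimum. Among such chains I would take one of least cofinality and pass to a cofinal subchain of order type a regular cardinal. This chain has no bottom. Minimality gives that every proper initial segment (taken from the top) does have an infimum in $L$, which I will need for continuity.

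\textbf{Step 2: build the retraction on $U$.} Put $U=\bigcup_{w\in W}\up w$. I would show that $U$ is Scott open: a directed $D$ with $\sup D\geq w$ must meet $\up w'$ for some $w'$ further down $W$. This uses meet continuity, since every open neighbourhood of $\sup D$ meets $\da D$, together with regularity of the order type. On $U$, define
\[
r(x)=\max\{w\in W: w\leq x\},
\]
taken in the order of $W$. This set is an upper set of $W$, and it is nonempty for $x\in U$. I would check that it always has a greatest element, using that $W$ has no infimum and the initial-segment infima from Step 1. Monotonicity is clear. Preservation of directed sups reduces to the compactness-type property of each $w$ relative to $U$ established in the openness argument.

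\textbf{Step 3: handle $C=L\setminus U$ by sobriety.} The set $C$ is Scott closed, and I would split into cases.
\begin{itemize}
\item If $C=\varnothing$, then $r$ together with the inclusion $W\hookrightarrow L$ exhibits $W$ as a retract.
\item If $C$ were irreducible, sobriety would give $C=\overline{\{c\}}=\da c$. Then $c$ is a lower bound of $W$, since every $w$ lies outside $C$'s complement-free part. Combined with meet continuity, $c$ would be the infimum of $W$, a contradiction.
\item Otherwise $C=C_1\cup C_2$ with proper closed pieces. If $C_1\cap C_2=\varnothing$, send $C_1\mapsto a$ and $C_2\mapsto b$, giving $K(W)$. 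In general send $C_1\setminus C_2\mapsto a$, $C_2\setminus C_1\mapsto b$ and $C_1\cap C_2\mapsto\bot$, giving $K(W)_\bot$.
\end{itemize}
Section maps send $a$ and $b$ to chosen points of $C_1\setminus C_2$ and $C_2\setminus C_1$, and $\bot$ to a point of $C_1\cap C_2$. These points must be lower bounds of $W$; I would arrange this by choosing the decomposition inside $C\cap\bigcap_w\da w$, which I expect to contain the relevant part of $C$.

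For (ii), the bottom of $P$ always lies in $C$. Collapsing as in the third case then yields $K(W)_\bot$; if only one piece is needed, the other piece is replaced by $\bot$'s image.

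\textbf{Main obstacle.} I expect the hardest part to be Scott continuity of the glued map $r$ at points of $U$ that are suprema of directed sets drawn from $C$. This is exactly where meet continuity and the regular-cofinality choice of $W$ must be used. Choosing the splitting of $C$ so that the section maps land on genuine lower bounds of $W$ is a close second.
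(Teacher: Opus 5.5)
The paper gives no proof of this statement; it is quoted from Jia--Jung--Li. Taken on its own terms, your reconstruction has two genuine gaps.

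\textbf{First gap: Step~2 is false, and Step~3 inherits the error.} The set $\up W$ need not be Scott open, even in a continuous dcpo. In $(0,1]\times[0,1]$ with $W=\{(1/n,1):n\geq1\}$, the point $(1,1)\in\up W$ is the supremum of the chain $(1,1-1/m)$, and no term of that chain lies in $\up W$. Even when $\up W$ is open, $r(x)=\max\{w\in W:w\leq x\}$ is not Scott continuous. In $(0,1]$ with $W=\{1/n\}$ one has $r(1/n)=1/n$, but $r(x)=1/(n+1)$ for $x$ just below $1/n$. No ``compactness of $w$ relative to $U$'' is available, and neither meet continuity nor regular cofinality supplies it. Your case split also breaks down: $L\setminus U$ can contain points incomparable with $W$, so an irreducible complement $\da c$ gives no contradiction (add an isolated point to $(0,1]$). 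The partition that works uses the set of lower bounds $B=\bigcap_{w\in W}\da w$, which is automatically Scott closed, together with $U=L\setminus B$. Let $w^-$ be the greatest element of $W$ strictly below $w$; it exists because $W$ is downward well ordered without least element. For $x\in U$ put $r(x)=\max\{w\in W:x\not\leq w^-\}$. Then $r^{-1}(\up w)=L\setminus\da w^-$ is Scott open and $r$ fixes $W$. If $B=\varnothing$, this already exhibits $W$ as a retract. If $B$ is irreducible, sobriety alone gives $B=\da c$ with $c=\inf W$, a contradiction. The minimal-cofinality choice in Step~1 is therefore unnecessary.

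\textbf{Second gap: the section in the $K(W)_\bot$ case need not be monotone.} Monotonicity requires $s(\bot)\leq s(a),s(b)$. An arbitrary point of $C_1\cap C_2$ need not lie below your chosen points of $C_1\setminus C_2$ and $C_2\setminus C_1$. In~(ii) this is harmless, because $\bot_P$ lies in every nonempty closed set. In~(i) this is exactly where meet continuity is needed, not in Step~2. Here is one way to close the gap:
\begin{enumerate}[label=(\alph*)]
\item Take Scott-open $V_1,V_2$ witnessing that $B$ is reducible. Put $U_1=B\setminus\overline{V_2\cap B}$ and $U_2=B\setminus\overline{U_1}$. These are disjoint, nonempty, relatively open upper subsets of $B$, and $B=\overline{U_1}\cup U_2$.
\item If some $y_1\in U_1$ and $y_2\in U_2$ have a common lower bound $z$, then $z\notin U_1\cup U_2$. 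Send $U_1\mapsto a$, $U_2\mapsto b$ and $B\setminus(U_1\cup U_2)\mapsto\bot$, with section $a\mapsto y_1$, $b\mapsto y_2$, $\bot\mapsto z$. This gives the retract $K(W)_\bot$.
\item Otherwise $\da U_1\cap\da U_2=\varnothing$. Meet continuity makes $\up(\da x\cap V)$ Scott open for every Scott-open $V$. Taking $V=L$ shows that, for a lower set $A$, $x\in\overline A$ forces $\da x\cap A\neq\varnothing$. Applying this twice gives $\overline{U_1}\cap\overline{U_2}=\varnothing$. So $B$ splits into two disjoint nonempty closed sets, and $K(W)$ is the retract.
\end{enumerate}
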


Two finite pointed obstructions will be needed for compact sources. Let
\[
  \mathsf B_5=\{\bot,u,v,a,b\},
\]
where \(\bot\) is the least element, \(u\) and \(v\) are incomparable,
\(a\) and \(b\) are incomparable, and
\(
  u,v<a,b.
\)
Let $\mathsf B_5^\top$ be obtained from $\mathsf B_5$ by adjoining a new
greatest element $\top$.
Finally, put
\[
  \mathsf B_4=\mathsf B_5\setminus\{\bot\},
  \qquad
  \mathsf B_4^\top=\mathsf B_5^\top\setminus\{\bot\}.
\]

\begin{figure}[t]
\centering
\begin{minipage}{0.47\linewidth}
\centering
\begin{tikzpicture}[x=0.95cm,y=0.75cm,
 every node/.style={circle,draw,inner sep=1.2pt,minimum size=5.5mm}]
\node (bot) at (0,0) {$\bot$};
\node (u) at (-1,1) {$u$};
\node (v) at (1,1) {$v$};
\node (a) at (-1,2) {$a$};
\node (b) at (1,2) {$b$};
\node (top) at (0,3) {$\top$};
\draw (bot)--(u) (bot)--(v);
\draw (u)--(a) (u)--(b) (v)--(a) (v)--(b);
\draw (a)--(top) (b)--(top);
\end{tikzpicture}

$\mathsf B_5^\top$
\end{minipage}
\hfill
\begin{minipage}{0.47\linewidth}
\centering
\begin{tikzpicture}[x=0.95cm,y=0.85cm,
 every node/.style={circle,draw,inner sep=1.2pt,minimum size=5.5mm}]
\node (bot) at (0,0) {$\bot$};
\node (u) at (-1,1) {$u$};
\node (v) at (1,1) {$v$};
\node (a) at (-1,2) {$a$};
\node (b) at (1,2) {$b$};
\draw (bot)--(u) (bot)--(v);
\draw (u)--(a) (u)--(b) (v)--(a) (v)--(b);
\end{tikzpicture}

$\mathsf B_5$
\end{minipage}
\caption{The finite pointed retract obstructions used for compact sources.}
\label{fig:finite-obstructions}
\end{figure}

\begin{theorem}[{\cite[Theorem~5.2]{JiaJungLi2019}}]
\label{thm:JJL-nonL}
Let $P$ be a pointed sober bicomplete dcpo. If $P$ is not an $L$-dcpo, then
$\mathsf B_5^\top$ is a Scott-continuous retract of $P$.
\end{theorem}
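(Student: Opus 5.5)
The plan is to read the $\mathsf B_5^\top$ configuration off a single principal ideal that fails to be a complete lattice, and then to build a Scott-continuous retraction onto it out of principal ideals. Throughout, $s:\mathsf B_5^\top\to P$ will be the inclusion of the chosen configuration. Since $\mathsf B_5^\top$ is finite, $s$ is Scott continuous as soon as it is monotone. Hence all the work lies in the retraction $r$.

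\emph{Locating the configuration.} Since $P$ is not an $L$-dcpo, some $\da x$ is not a complete lattice. This ideal has a least element $\bot$ because $P$ is pointed, and it is closed under directed suprema. Arbitrary joins are directed joins of finite joins, so some pair $u,v\le x$ must have no least upper bound in $\da x$. Then $u$ and $v$ are incomparable, for otherwise the larger one would be the join. Let $M=\up u\cap\up v\cap\da x$, which contains $x$. Every filtered subset of $M$ has an infimum in $P$ by bicompleteness, and this infimum lies in $M$. In particular every chain in $M$ has an infimum in $M$. By Zorn's lemma applied downward, every element of $M$ lies above a minimal element of $M$. If $M$ had only one minimal element, it would be the least element of $M$, that is, the join of $u$ and $v$ in $\da x$, which is impossible. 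So there are distinct minimal elements $a,b\in M$. They are incomparable, and every common upper bound of $u,v$ lying below both $a$ and $b$ equals $a=b$, a contradiction. Together with $\top:=x$ this gives a copy of $\mathsf B_5^\top$ inside $P$.

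\emph{Improving the configuration.} Let $C=\da a\cap\da b$; it is Scott closed and contains $u,v$. Applying Zorn's lemma upward inside $C$ (which has directed suprema), I replace $u$ and $v$ by maximal elements $u'\ge u$ and $v'\ge v$ of $C$. Then $u'\ne v'$, since a common element would be an upper bound of $u,v$ strictly below $a$ and $b$. Moreover $u',v'$ still have $a$ and $b$ as minimal upper bounds in $\da x$, so the configuration survives the replacement.

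\emph{The retraction.} I would define $r$ by prescribing preimages of the principal lower sets of $\mathsf B_5^\top$ and checking that each prescribed preimage is Scott closed:
\[
r^{-1}(\da a)=\da a,\qquad r^{-1}(\da b)=\da b,\qquad r^{-1}(\da v)=\da v',
\]
\[
r^{-1}(\da u)=A,\qquad r^{-1}(\da\bot)=A\cap\da v'.
\]
Here $A$ is a Scott-closed set with $u'\in A$, $v'\notin A$ and $A\cup\da v'=C$. This prescription forces $r(y)=\top$ outside $\da a\cup\da b$, and it yields a monotone map that is the identity on the configuration. Since the preimages of all lower sets are then Scott closed, $r$ is Scott continuous.

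\emph{Main obstacle.} The main obstacle is producing $A$. The naive choice, the Scott closure of $C\setminus\da v'$, might contain $v'$. This is exactly where sobriety and bicompleteness must enter. I would first try to show that if $v'$ lay in that closure, then the closed sets involved would yield an irreducible closed subset of $C$ with no generic point. Alternatively, a filtered family of upper bounds would have an infimum contradicting the maximality of $v'$ or the minimality of $a$ and $b$. If this fails, I would re-choose $u$ and $v$ (as well as $a$ and $b$) more cleverly, so that $C$ splits as $\da u'\cup\da v'$, making $A=\da u'$ work outright.
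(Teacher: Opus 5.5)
Your location of $u,v,a,b$ is correct, and so is your reduction of the Scott continuity of $r$ to the Scott-closedness of the prescribed preimages. The gap is that the set $A$ carries the entire content of the theorem, and for the configuration you fix it need not exist. The obstruction lies in that configuration, not in how $r$ is built. For any Scott-continuous $r$ with $r(u')=u$, $r(v')=v$, $r(a)=a$, $r(b)=b$, the sets $r^{-1}(\up u)$ and $r^{-1}(\up v)$ are Scott-open neighbourhoods of $u'$ and $v'$ whose intersection misses $C=\da a\cap\da b$, since no element of $\mathsf B_5^\top$ lies above $u,v$ and below $a,b$. Now let $P$ consist of the following elements: $\bot$; two chains $k_1<k_2<\cdots$ and $l_1<l_2<\cdots$ with suprema $u'$ and $v'$; pairwise incomparable elements $c_n$ with $k_m,l_m<c_n$ for $m\le n$; and $a,b,x$ with $u',v',c_n<a,b<x$. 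This $P$ is a pointed algebraic (hence sober) dcpo. It has no infinite descending chains, so it is bicomplete. Your procedure may select the top $x$ together with $u=u'$ and $v=v'$. Then $a,b$ are the minimal upper bounds, and $u',v'$ are already maximal in $C$. But every Scott-open neighbourhood of $u'$ contains some $k_n$ and hence all $c_m$ with $m\ge n$, and likewise for $v'$. Equivalently, any Scott-closed $A\supseteq C\setminus\da v'$ contains every $c_m$, hence every $l_m$, hence $v'$. Since this $P$ is sober and bicomplete, neither your generic-point argument nor your filtered-infimum argument can succeed, and passing to maximal elements of $C$ does not help. Your fallback of re-choosing the configuration is the right instinct, but you give no method, and the target $C=\da u'\cup\da v'$ is stronger than needed.

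The missing idea is to apply sobriety to $C$ itself and re-choose the configuration from a splitting of $C$. The set $C$ is Scott closed and not of the form $\da c$: such a $c$ would be an upper bound of $u,v$ below $a$ and $b$, forcing $a=c=b$. By sobriety $C$ is reducible, say $C=C_1\cup C_2$ with $C_1,C_2$ proper Scott-closed subsets. Pick $c_1\in C\setminus C_2$ and $c_2\in C\setminus C_1$; then $\{c_1,c_2\}$ has no upper bound in $C$. By bicompleteness and Zorn's lemma, choose minimal upper bounds $a'\le a$ and $b'\le b$ of $\{c_1,c_2\}$. They are incomparable, since $a'\le b'$ would place $a'$ in $C$ as an upper bound of $c_1,c_2$, and symmetrically. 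Put $C'=\da a'\cap\da b'\subseteq C$. Your prescription now works for the configuration $\bot<c_1,c_2<a',b'<x$ if you set $r^{-1}(\da a)=\da a'$, $r^{-1}(\da b)=\da b'$, $r^{-1}(\da u)=C_1\cap C'$, $r^{-1}(\da v)=C_2\cap C'$ and $r^{-1}(\da\bot)=C_1\cap C_2\cap C'$. These sets are Scott closed, the two middle ones cover $C'$, and they separate $c_1$ from $c_2$ by construction. So the second piece must be allowed to be an arbitrary Scott-closed set rather than $\da v'$, and the step passing to maximal elements of $C$ becomes unnecessary.
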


The poset $\mathsf B_5$ is the standard continuous $L$-dcpo that is not
bounded complete~\cite[Figure 11]{AbramskyJung1994}. We shall also use the standard fact that every bounded-complete continuous
dcpo is bicomplete \cite[Proposition~4.1.2]{AbramskyJung1994}.

\begin{theorem}[{\cite[Exercise~4.3.11(3c), p.~66]{AbramskyJung1994}}]
\label{thm:AJ-L-not-BC}
Let $P$ be a pointed continuous $L$-dcpo. If $P$ is not bounded complete,
then $\mathsf B_5$ is a Scott-continuous retract of $P$.
\end{theorem}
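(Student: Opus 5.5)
The plan is to extract a finite configuration from the failure of bounded completeness, and then build the retraction from way-below neighbourhoods. The key fact for Scott continuity is that \emph{local suprema are locally constant} in an $L$-dcpo.

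\textbf{Step 1: two incomparable minimal upper bounds.} Since $P$ is pointed, the empty set has a supremum, so some nonempty $A\subseteq P$ has an upper bound but no supremum. For an upper bound $x$ of $A$, write $s_x(A)$ for the supremum of $A$ in the complete lattice $\da x$.

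If $x\le y$ are upper bounds, then $s_x(A)=s_y(A)$. Moreover, each $s_x(A)$ is a minimal upper bound: if $A\le z\le s_x(A)$, then $s_x(A)$ is the supremum of $A$ in $\da s_x(A)\supseteq\da z$, so $s_x(A)\le z$.

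If all the $s_x(A)$ coincided, their common value would be $\sup A$. Hence there are upper bounds with $a'=s_x(A)\ne b'=s_y(A)$, and these are incomparable minimal upper bounds.

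\textbf{Step 2: reduction to two approximants.} By continuity, $a'$ is the directed supremum of the local suprema $s_{a'}(F)$, where $F$ ranges over finite sets of elements way below elements of $A$; the same holds for $b'$. Since $a'\ne b'$, some such finite $F$ has $s_{a'}(F)\ne s_{b'}(F)$.

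Take $F$ of minimal size, so $|F|\ge2$. If $|F|\ge3$, write $F=F'\cup\{w\}$. Then $t=s_{a'}(F')=s_{b'}(F')$, and $\{t,w\}$ has the same two distinct local suprema as $F$, contradicting minimality. So $F=\{u_0,v_0\}$ with $u_0\ll u$ and $v_0\ll v$ for some $u,v\in A$.

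Interpolating if necessary, and using that the elements way below $u$ (resp.\ $v$) form directed sets, I will enlarge $u_0,v_0$ inside these directed sets. The aim is that additionally $v_0\not\le u$ and $u_0\not\le v$, while the local suprema $a_1=s_{a'}(u_0,v_0)$ and $b_1=s_{b'}(u_0,v_0)$ stay distinct. This is where incomparability of $u,v$ is used: if $u\le v$, then $s_{a'}(A)\ldots$ would force a supremum. Arranging these conditions simultaneously is the delicate bookkeeping point; I expect it to go through by the same directedness argument as above.

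\textbf{Step 3: the maps.} Define $s:\mathsf B_5\to P$ by
\[
\bot\mapsto\bot,\qquad u\mapsto u,\qquad v\mapsto v,\qquad a\mapsto a',\qquad b\mapsto b'.
\]
It is monotone, hence Scott continuous on the finite poset $\mathsf B_5$.

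Let $G=\up\!\!\up u_0\cap\up\!\!\up v_0$, which is Scott open. For $x\in G$ let $m(x)=s_x(u_0,v_0)$. Define $r:P\to\mathsf B_5$ as follows:
\begin{itemize}
\item $r(x)=a$ if $x\in G$ and $m(x)=a_1$;
\item $r(x)=b$ if $x\in G$ and $m(x)\ne a_1$;
\item $r(x)=u$ if $x\in\up\!\!\up u_0\setminus\up\!\!\up v_0$;
\item $r(x)=v$ if $x\in\up\!\!\up v_0\setminus\up\!\!\up u_0$;
\item $r(x)=\bot$ otherwise.
\end{itemize}

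\textbf{Step 4: Scott continuity of $r$ (the main obstacle).} Since $\mathsf B_5$ is finite, Scott continuity of $r$ means that the preimages of the upper sets of $\mathsf B_5$ are Scott open. We have
\[
r^{-1}(\up u)=\up\!\!\up u_0,\qquad r^{-1}(\up v)=\up\!\!\up v_0,\qquad r^{-1}(\{a,b\})=G,
\]
and all three are Scott open. It remains to show that $G_a=\{x\in G:m(x)=a_1\}$ and $G_b=G\setminus G_a$ are Scott open.

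Both are upper sets, because $x\le y$ in $G$ gives $m(x)=m(y)$. For inaccessibility, let $D$ be directed with $\sup D\in G_a$ (resp.\ $G_b$). Some $d\in D$ lies in $G$ since $G$ is open, and then $m(d)=m(\sup D)$. This is exactly the locally-constant observation: in an $L$-dcpo, $m$ is constant along comparable pairs, so these two sets carve $G$ into Scott-open pieces.

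\textbf{Step 5: checking $r\circ s=\id$.}
\begin{itemize}
\item $r(u)=u$, because $u_0\ll u$ and $v_0\not\le u$ give $u\notin\up\!\!\up v_0$; symmetrically $r(v)=v$.
\item $r(a')=a$, because $a'\ge u,v$ puts $a'\in G$ and $m(a')=a_1$.
\item $r(b')=b$, because $m(b')=b_1\ne a_1$.
\item $r(\bot)=\bot$, because $\bot\in\up\!\!\up u_0$ would force $u_0\ll\bot$, so $u_0=\bot\le v$, which the choice $u_0\not\le v$ excludes; similarly $\bot\notin\up\!\!\up v_0$.
\end{itemize}
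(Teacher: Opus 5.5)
The paper gives no proof of its own here; it cites Abramsky--Jung. On its own terms, your Steps 1, 3, 4 and 5 are sound. In particular, constancy of $m(x)=s_x(u_0,v_0)$ on comparable pairs makes $G_a$ and $G_b$ Scott open, and your preimages cover all upper sets of $\mathsf B_5$.

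The gap is in Step 2. In the minimality argument, the competitor $\{t,w\}$ with $t=s_{a'}(F')$ is not a set of elements way below members of $A$. A local join of approximants of different members of $A$ need not lie below any single member of $A$, so minimality is not contradicted. Worse, the conclusion itself (a pair $u_0\ll u$, $v_0\ll v$ with $u,v\in A$) is false in general. Take $P=\{\bot,c_1,c_2,c_3,c_{12},c_{13},c_{23},a',b'\}$ with $c_i,c_j<c_{ij}$ and every $c_{ij}$ below the two incomparable maximal elements $a',b'$. This is a finite pointed $L$-domain, and $\da a'$, $\da b'$ are copies of $2^3$. The set $A=\{c_1,c_2,c_3\}$ has upper bounds $a',b'$ and no supremum. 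All elements are compact, so the approximants of members of $A$ are $\bot,c_1,c_2,c_3$. Every pair of these has the same local supremum in $\da a'$ and in $\da b'$, and only the triple separates $a'$ from $b'$.

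The repair is to minimize over finite subsets of $C=\da a'\cap\da b'$ instead. Since $A\subseteq C$, you still get a finite $F\subseteq C$ with $s_{a'}(F)\neq s_{b'}(F)$. Now the common value $t=s_{a'}(F')=s_{b'}(F')$ does lie in $C$, so a minimal $F$ is a pair $\{x,y\}\subseteq C$. Binary joins in the complete lattice $\da a'$ commute with directed suprema, and those are computed in $P$ because $\da a'$ is Scott closed. Hence $s_{a'}(x,y)=\sup^{\uparrow}\{s_{a'}(x_0,y_0):x_0\ll x,\ y_0\ll y\}$, and likewise for $b'$. So some $u_0\ll x$, $v_0\ll y$ still have distinct local suprema. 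Put $u=x$, $v=y$: Step 3 only needs $u,v\le a'$ and $u,v\le b'$, not $u,v\in A$.

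The ``delicate bookkeeping'' you deferred is automatic and needs no enlargement. If $v_0\le u$, then $u$ is an upper bound of $\{u_0,v_0\}$ lying below both $a'$ and $b'$. Then $s_{a'}(u_0,v_0)=s_u(u_0,v_0)=s_{b'}(u_0,v_0)$, a contradiction. Symmetrically $u_0\not\le v$, and the same argument gives $u_0,v_0\neq\bot$ and incomparability of $u$ and $v$. With these changes your retraction argument goes through.
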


\section{Separation constructions and Alexandrov test spaces}
\label{sec:separation-detectors}

Theorem~\ref{thm:JJL-bicomplete} reduces the failure of bicompleteness
to three retract obstructions,
\(
  W,\ K(W),\ K(W)_\bot,
\)
where \(W\) is a downward well-ordered chain without a least element.
In this section we show that these obstructions fail the Scott--Isbell
coincidence properties needed later. We first treat the basic countable
case, where the separation can be described explicitly. We then extend
the same argument to an arbitrary \(W\), which requires
a cardinal argument.

Throughout this section, elements of a power \(W^I\) are written as
functions \(f:I\to W\), with coordinate values \(f(i)\), and
\(\preceq\) denotes the pointwise order on the power. We write
\(\top\) for the greatest element of \(W\), when it exists, and
\(\top_I\) for the greatest element of \(W^I\), that is, the
constant function \(\top_I(i)=\top\).

\subsection{Countable obstructions}

The simplest instance is the descending chain
\(
  \mathbb N^{\mathrm{op}}=\{\top>1>2>\cdots\},
  \qquad \top=0.
\)
The corresponding forms of the three retract obstructions are shown in
Figure~\ref{fig:countable-obstructions}.

\begin{figure}[!htbp]
\centering
\begin{tikzpicture}[
  x=1.25cm,y=0.72cm,
  every node/.style={inner sep=1.5pt}
]

\node at (-3.8,3.8) {\(\mathbb N^{\mathrm{op}}\)};
\node (n0) at (-3.8,3.1) {\(\top\)};
\node (n1) at (-3.8,2.45) {\(1\)};
\node (n2) at (-3.8,1.80) {\(2\)};
\node at (-3.8,1.15) {\(\vdots\)};
\draw (n0)--(n1)--(n2);

\node at (0,3.8) {\(K(\mathbb N^{\mathrm{op}})\)};
\node (k0) at (0,3.1) {\(\top\)};
\node (k1) at (0,2.45) {\(1\)};
\node (k2) at (0,1.80) {\(2\)};
\node at (0,1.15) {\(\vdots\)};
\node (a) at (-0.55,0.2) {\(a\)};
\node (b) at (0.55,0.2) {\(b\)};
\draw (k0)--(k1)--(k2);
\draw[dashed] (0,0.7)--(a);
\draw[dashed] (0,0.7)--(b);

\node at (4.2,3.8) {\(K(\mathbb N^{\mathrm{op}})_\bot\)};
\node (l0) at (4.2,3.1) {\(\top\)};
\node (l1) at (4.2,2.45) {\(1\)};
\node (l2) at (4.2,1.80) {\(2\)};
\node at (4.2,1.15) {\(\vdots\)};
\node (la) at (3.65,0.2) {\(a\)};
\node (lb) at (4.75,0.2) {\(b\)};
\node (bot) at (4.2,-0.45) {\(\bot\)};
\draw (l0)--(l1)--(l2);
\draw[dashed] (4.2,0.7)--(la);
\draw[dashed] (4.2,0.7)--(lb);
\draw (la)--(bot);
\draw (lb)--(bot);

\end{tikzpicture}
\caption{The basic countable forms of the three retract obstructions,
with \(\top=0\). The dashed lines indicate that \(a\) and \(b\) lie below every element
of the descending chain.}
\label{fig:countable-obstructions}
\end{figure}

\Needspace{11\baselineskip}
\begin{proposition}\label{prop:countable-prototype}
Let \(\mathbb N^{\mathrm{op}}\) denote the natural numbers with the
reverse of their usual order. If \(I\) is countable, then
\[
  \tprod\bigl((\mathbb N^{\mathrm{op}})^I\bigr)
  =
  \sigma\bigl((\mathbb N^{\mathrm{op}})^I\bigr),
\]
whereas if \(I\) is uncountable, then
\[
  \tprod\bigl((\mathbb N^{\mathrm{op}})^I\bigr)
  \subsetneq
  \sigma\bigl((\mathbb N^{\mathrm{op}})^I\bigr).
\]
\end{proposition}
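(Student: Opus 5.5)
The plan is to handle the two halves separately. First, the inclusion $\tprod\subseteq\sigma$ holds for every $I$, since each projection $W^I\to W$ is Scott continuous. Second, since every ascending chain in $\mathbb N^{\mathrm{op}}$ is finite, every directed subset of $\mathbb N^{\mathrm{op}}$ has a greatest element. Hence $\sigma(\mathbb N^{\mathrm{op}})$ consists of all upper sets, and its nonempty members are the finite sets $\up n=\{\top,1,\dots,n\}$. The countable case is a diagonal argument. The uncountable case needs an explicit Scott-open set that is not product open, and I expect finding and checking that set to be the real difficulty.

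\emph{Countable $I$.} Enumerate $I$ and let $J_1\subseteq J_2\subseteq\cdots$ be finite sets exhausting $I$. Let $U$ be Scott open and $f\in U$, and suppose no basic product neighbourhood $\{g: g(i)\succeq f(i),\ i\in J_n\}$ of $f$ lies inside $U$. Then for each $n$ there is $g_n\notin U$ with $g_n\succeq f$ on $J_n$. The complement $A$ of $U$ is a lower set, so $e_n=g_n\wedge f$ (a pointwise minimum in the chain) lies in $A$. Moreover $e_n\preceq f$, and $e_n(i)=f(i)$ whenever $i\in J_n$.

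For each $i$ let $N_i$ be least with $i\in J_{N_i}$. Put
\[
  d_n(i)=\min\{e_n(i),e_{n+1}(i),\dots,e_{N_i}(i),f(i)\}
\]
in the order of $\mathbb N^{\mathrm{op}}$. This is a minimum over finitely many values, because $e_m(i)=f(i)$ for all $m\ge N_i$, so it exists. Then $d_n\preceq e_n$, so $d_n\in A$. The sequence $(d_n)$ is increasing in $n$. Finally $d_n(i)=f(i)$ for $n\ge N_i$, so $\sup_n d_n=f$. Since $A$ is Scott closed, this forces $f\in A$, a contradiction. Hence $U$ is product open.

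\emph{Uncountable $I$.} The key observation comes from finite-support directed families. Fix $p$ and some $g\preceq p$. For finite $F\subseteq I$, let $p^g_F$ agree with $p$ on $F$ and with $g$ off $F$. The family $\{p^g_F\}$ is directed with supremum $p$. So a Scott-open $U\ni p$ contains some $p^g_F$, but the finite set $F$ may depend on $g$. Product openness at $p$ would require one $F$ that works for all $g$.

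Since $I$ is uncountable, every $f\in(\mathbb N^{\mathrm{op}})^I$ has a least index
\[
  \mu(f)=\min\{k : f^{-1}(\up k)\text{ is uncountable}\}.
\]
It is antitone in the sense that $f\preceq g$ implies $\mu(g)\le\mu(f)$ as integers. I would try a set of the form
\[
  U=\{f : |f^{-1}(\top)|\ge \mu(f)\},
\]
or a variant of it, tested at the point $\top_I$. By the antitone property, $U$ is an upper set, and $\top_I\in U$.

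$U$ contains no product neighbourhood of $\top_I$. Given finite $J$ with $|J|=m$, let $f$ equal $\top$ on $J$ and $m+1$ elsewhere. Then $|f^{-1}(\top)|=m<m+1=\mu(f)$, so $f\notin U$.

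The main obstacle is Scott-inaccessibility. Suppose $s=\sup D\in U$ for a directed $D$. The set $s^{-1}(\up\mu(s))$ is uncountable, but it may be a directed union of countable sets $d^{-1}(\up\mu(s))$, so $\mu(d)$ can stay above $\mu(s)$ for every $d\in D$. If $s^{-1}(\top)$ is infinite, large finite pieces of it are eventually reached and the argument goes through. The finite case is the danger. Here I would refine the definition, for instance by comparing $|f^{-1}(\up k)|$ with $\mu(f)$ over all $k$, or by intersecting $U$ with a Scott-open set that excludes the problematic $s$. The aim is a condition whose witnesses are always finite for the Scott direction, while its threshold $\mu$ stays unbounded over the product neighbourhoods of $\top_I$.
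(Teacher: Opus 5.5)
Your countable half is correct and is essentially the paper's diagonal argument in open-set form. Your tail minima $d_n(i)=\min_{m\ge n}e_m(i)$ (taken in $\mathbb N^{\mathrm{op}}$) of the normalized witnesses $e_n=g_n\wedge f$ do the job of the paper's $d_n$ built from the $a_k$.

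The gap is the uncountable half, which is the substantive part of the statement. You never exhibit a set that is Scott open but not product open. Moreover, your candidate $U=\{f:|f^{-1}(\top)|\ge\mu(f)\}$ is not Scott open, and it fails in exactly the case you flagged. Fix $i_0\in I$ and let $s$ be $\top$ at $i_0$ and $1$ elsewhere. Then $|s^{-1}(\top)|=1=\mu(s)$, so $s\in U$. For countable $C\subseteq I\setminus\{i_0\}$, let $d_C$ be $\top$ at $i_0$, $1$ on $C$, and $2$ elsewhere. Since $d_C^{-1}(\up 1)=C\cup\{i_0\}$ is countable, $\mu(d_C)=2>1=|d_C^{-1}(\top)|$, so $d_C\notin U$. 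Yet the $d_C$ form a directed family (since $d_C\preceq d_{C\cup C'}\succeq d_{C'}$) whose supremum is $s$. The defect is structural. $\mu$ is defined through uncountability, and an uncountable level set of $\sup D$ can be the directed union of countable level sets of members of $D$. So $\mu$ can drop strictly at a supremum, and a condition with threshold $\mu(f)$ is not inaccessible by directed suprema.

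The missing idea is to make the threshold depend only on the level. Uncountability is then used only to produce one infinite level set of one member of $D$, which propagates upward by monotonicity. The paper takes $F=\da\{g_J\}$, where $J$ ranges over nonempty finite subsets of $I$ and $g_J$ is $\top$ on $J$ and $|J|$ off $J$. Equivalently, $F=\bigcup_{n\ge1}F_n$ with $F_n=\{f:|E_n(f)|\le n\}$ and $E_n(f)=\{i:f(i)<n\}$ in the usual order of $\mathbb N$. Each $F_n$ is Scott closed, since $E_n(\sup D)$ is the directed union of the sets $E_n(d)$, and such a union of sets of size at most $n$ has size at most $n$.

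Now take a directed $D\subseteq F$ and $d_0\in D$. Since $I=\bigcup_m E_m(d_0)$ is uncountable, some $E_m(d_0)$ is infinite; this is exactly what fails for countable $I$, e.g.\ $d_0(i)=i$. Then $E_m(d)$ is infinite for every $d\succeq d_0$, which rules out $F_n$ for all $n\ge m$. Hence $\{d\in D:d_0\preceq d\}$ is a directed set covered by the finitely many lower sets $F_1,\dots,F_{m-1}$. It therefore lies in a single $F_{n_*}$, and $\sup D\in F_{n_*}\subseteq F$. Finally, $\top_I\notin F$, while $g_J\in F$ agrees with $\top_I$ on $J$, so $\top_I$ lies in the product closure of $F$.

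In your notation, the Scott-open set is the complement $\{f:|f^{-1}(\up k)|\ge k+2\text{ for all }k\ge0\}$. Your suggestion to compare $|f^{-1}(\up k)|$ across all $k$ points the right way, but the bound must be $k+2$, not $\mu(f)$.
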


\begin{proof}
Put \(P=(\mathbb N^{\mathrm{op}})^I\). In this proof, inequalities
between natural numbers refer to their usual order. Thus the pointwise
order on \(P\) is given by
\[
  f\preceq g
  \quad\Longleftrightarrow\quad
  f(i)\geq g(i)\quad\text{for every }i\in I.
\]
Recall that \(\top=0\) in \(\mathbb N^{\mathrm{op}}\).
Since coordinate projections are Scott continuous, the product topology
is always contained in the Scott topology.

\medskip
\noindent
\emph{The countable case.}
If \(I\) is finite, every element of \(P\) is compact and its
principal upper set is product open, so the two topologies agree.
Suppose that \(I\) is countably infinite and identify it with
\(\{1,2,3,\ldots\}\). Let \(F\subseteq P\) be Scott closed and
let \(z\in P\) belong to its closure of the product topology.

For \(n\geq1\), the set
\[
  U_n=\{f\in P:f(i)\leq z(i)\text{ for }1\leq i\leq n\}
\]
is a basic product neighbourhood of \(z\), since
\(\up_{\mathbb N^{\mathrm{op}}}z(i)
=\{k\in\mathbb N:k\leq z(i)\}\) is Scott open.
Choose \(a_n\in F\cap U_n\).
For \(n\geq1\), define \(d_n\in P\) by
\[
  d_n(i)=
  \begin{cases}
    z(i),&i\leq n,\\[1mm]
    \max\bigl(\{z(i)\}\cup\{a_k(i):1\leq k<i\}\bigr),&i>n,
  \end{cases}
\]
where the maximum is taken in the usual order on \(\mathbb N\).
For \(i\leq n\), we have \(a_n(i)\leq z(i)=d_n(i)\); for
\(i>n\), the term \(a_n(i)\) occurs in the maximum defining
\(d_n(i)\). Hence \(d_n\preceq a_n\), and therefore \(d_n\in F\).

Moreover, \(d_n\preceq d_{n+1}\). These functions agree except
possibly at coordinate \(n+1\), where
\(d_n(n+1)\geq z(n+1)=d_{n+1}(n+1)\) in the usual order.
For every fixed \(i\), one has \(d_n(i)=z(i)\) once \(n\geq i\).
Hence \(\sup_n d_n=z\), so Scott closedness gives \(z\in F\).
Thus every Scott-closed set is product closed, and the two topologies
coincide.

\medskip
\noindent
\emph{The uncountable case.}
Suppose that \(I\) is uncountable. We construct a Scott-closed set
\(F\subseteq P\) which does not contain \(\top_I\) but meets every basic
product neighbourhood of \(\top_I\).
For each nonempty finite subset \(J\subseteq I\), define
\[
  g_J(i)=
  \begin{cases}
    \top,&i\in J,\\
    |J|,&i\notin J,
  \end{cases}
  \qquad
  F=\da\{g_J:J\subseteq I\text{ is nonempty and finite}\},
\]
where the lower closure is taken with respect to \(\preceq\).

To describe membership in \(F\), for \(n\geq1\) and \(f\in P\)
put
\[
  E_n(f)=\{i\in I:f(i)<n\},
  \qquad
  F_n=\{f\in P:|E_n(f)|\leq n\}.
\]
Thus \(F_n\) consists of the functions whose value is less than
\(n\) at no more than \(n\) coordinates. We have
\(
  F=\bigcup_{n\geq1}F_n.
\)
Indeed, if \(f\preceq g_J\) and \(n=|J|\), then
\(f(i)\geq n\) outside \(J\), so \(E_n(f)\subseteq J\).
Conversely, if \(f\in F_n\), enlarge \(E_n(f)\) to a set
\(J\subseteq I\) of cardinality \(n\). Then \(f(i)\geq n\)
outside \(J\), and \(f\preceq g_J\).

We now prove that \(F\) is Scott closed. 
We first observe that each \(F_n\) is Scott closed. Clearly \(F_n\) is
a lower set with respect to \(\preceq\). Let \(D\subseteq F_n\) be
nonempty and directed, and let \(s=\sup D\). We have, coordinatewise,
\(
  s(i)=\min\{d(i):d\in D\},
\)
and hence
\(
  E_n(s)
  =
  \bigcup_{d\in D}E_n(d).
\)
Moreover, the family \(\{E_n(d):d\in D\}\) is directed under inclusion.
Each \(E_n(d)\) has at most \(n\) elements. Therefore their directed
union also has at most \(n\) elements. Thus
\(
  |E_n(s)|\leq n,
\)
so \(s\in F_n\). Hence every \(F_n\) is Scott closed.

The point is now to show that, although
\(
  F=\bigcup_{n\geq1}F_n
\)
is an infinite union, a given directed subset of \(F\) is eventually
contained in a single \(F_n\).

Let \(D\subseteq F\) be nonempty and directed, and choose \(d_0\in D\).
Since every value \(d_0(i)\) is a natural number,
\(
  I=\bigcup_{m\geq1}E_m(d_0).
\)
As \(I\) is uncountable, at least one of the sets \(E_m(d_0)\) is
infinite. Fix such an \(m\), and consider the cofinal directed subset
\(
  D_0=\{d\in D:d_0\preceq d\}.
\)
If \(d\in D_0\), then
\(
  d(i)\leq d_0(i),
\) and consequently
\(
  E_m(d_0)\subseteq E_m(d).
\)
Thus \(E_m(d)\) is infinite. In particular, \(d\notin F_n\) for every
\(n\geq m\), since
\(
  E_m(d)\subseteq E_n(d)
\).  Since \(d\in F\), it follows that
\[
  D_0\subseteq F_1\cup\cdots\cup F_{m-1}.
\]

We use the elementary fact that a directed set covered by finitely many
lower sets must be contained in one of them. Then, there exists \(n_*<m\) such that
\(
  D_0\subseteq F_{n_*}.
\)
Since \(F_{n_*}\) is Scott closed and \(D_0\) is cofinal in \(D\),
\(
  \sup D=\sup D_0\in F_{n_*}\subseteq F.
\)
Hence \(F\) is Scott closed.

\emph{Finally, \(F\) is not closed with respect to the product topology.}
For every \(n\geq1\), we have \(E_n(\top_I)=I\), so
\(\top_I\notin F\). On the other hand, a basic product
neighbourhood of \(\top_I\) restricts only finitely many
coordinates. Choose a nonempty finite set \(J\) containing all of
them. Then \(g_J\in F\) agrees with \(\top_I\) at every
restricted coordinate, and hence belongs to that neighbourhood.
Thus \(\top_I\) lies in the product closure of \(F\) but not in
\(F\), proving the strict inclusion.
\end{proof}

The preceding proposition gives a basic obstruction to Scott--Isbell
coincidence. The chain \(\mathbb N^{\mathrm{op}}\) is the simplest
instance of the downward well-ordered obstruction in
Theorem~\ref{thm:JJL-bicomplete}. Indeed, let \(I\) be an uncountable
set with the discrete topology. Then \(I\) is core compact and
\(C(I,\mathbb N^{\mathrm{op}})=(\mathbb N^{\mathrm{op}})^I\).
By Lemma~\ref{lem:alexandrov}, the Isbell topology on this function
space is the product topology. Proposition~\ref{prop:countable-prototype}
therefore gives
\(\Is(C(I,\mathbb N^{\mathrm{op}}))
\subsetneq\sigma(C(I,\mathbb N^{\mathrm{op}}))\).
Thus \(\SI(\mathbb N^{\mathrm{op}})\) fails.

\subsection{General obstructions}

The preceding subsection treats the basic countable obstructions.
For a general downward well-ordered chain \(W\), the same idea applies,
but the number of coordinates must be chosen according to the size of
a coinitial family in \(W\). A subset \(B\) of a chain \(W\) is \emph{coinitial} if, for every
\(w\in W\), some \(b\in B\) satisfies \(b\leq w\).

\begin{theorem}\label{thm:power-separation}
Let \(W\) be a downward well-ordered chain without a least element.
There is a set \(I\) such that
\[
  \tprod(W^I)\subsetneq\sigma(W^I).
\]
More precisely, there is a Scott-closed set \(F\subseteq W^I\) whose
product closure contains \(\top_I\), although \(\top_I\notin F\).
\end{theorem}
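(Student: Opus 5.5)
The plan is to reduce to a canonical chain and then adapt the proof of Proposition~\ref{prop:countable-prototype}. I will first make two observations that hold for every such \(W\).

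Since \(W\) is downward well ordered, every nonempty subset has a greatest element. Hence every directed subset of \(W\) attains its supremum, and directed suprema in \(W^I\) are computed coordinatewise as attained maxima. Consequently every monotone map \(W\to W'\) between such chains is Scott continuous, and so is its power \(W^I\to W'^I\). Each power map is also continuous for \(\tprod\), coordinate by coordinate.

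\textbf{Step 1 (reduction).} Let \(\kappa\) be the cofinality of \(W^{\mathrm{op}}\). This is an infinite regular cardinal, because \(W^{\mathrm{op}}\) is a well order without a greatest element. Choose a strictly decreasing coinitial sequence \((w_\alpha)_{\alpha<\kappa}\) with \(w_0=\top\). Define \(s:\kappa^{\mathrm{op}}\to W\) by \(s(\alpha)=w_\alpha\), and \(r:W\to\kappa^{\mathrm{op}}\) by letting \(r(w)\) be the least \(\alpha\) with \(w_\alpha\le w\). Both maps are monotone, hence Scott continuous, and \(r\circ s=\id\).

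Now suppose \(F\subseteq(\kappa^{\mathrm{op}})^I\) is Scott closed, \(\top_I\notin F\), and \(\top_I\) lies in the product closure of \(F\). Put \(F'=(r^I)^{-1}(F)\). Then \(F'\) is Scott closed and \(\top_I\notin F'\), since \(r(\top)=0\). Because \(s^I\) is product continuous, maps \(F\) into \(F'\), and sends \(\top_I\) to \(\top_I\), the point \(\top_I\) lies in the product closure of \(F'\). So it suffices to treat \(W=\kappa^{\mathrm{op}}\). For \(\kappa=\omega\) this is exactly Proposition~\ref{prop:countable-prototype} with \(I\) uncountable.

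\textbf{Step 2 (uncountable \(\kappa\)).} I would imitate the countable construction. For finite nonempty \(J\subseteq I\), let \(g_J\) equal \(\top=0\) on \(J\) and a prescribed "level" value outside \(J\). Let \(F\) be the lower closure of these functions. This \(F\) splits as \(\bigcup_n F_n\), where \(F_n\) bounds the number of coordinates lying strictly above level \(n\).

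Each \(F_n\) is Scott closed by the same argument as before. The suprema are attained coordinatewise, so the exceptional sets form a directed union of sets of size at most \(n\), which again has size at most \(n\).

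The crucial step is to show that every \(d_0\in F\) has, for some finite \(m\), infinitely many coordinates above level \(m\). This forces any directed \(D\subseteq F\) into \(F_1\cup\cdots\cup F_{m-1}\), and then into a single \(F_{n_*}\), exactly as in the countable case. The product-closure part carries over verbatim: each \(g_J\) agrees with \(\top_I\) on \(J\), while \(\top_I\notin F\).

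\textbf{Main obstacle.} The levels cannot simply be the first \(\omega\) terms \(w_0>w_1>\cdots\). When \(\kappa>\omega\), a function \(d_0\) may take all its values below every \(w_n\), and then the covering \(I=\bigcup_m E_m(d_0)\) used in the countable proof fails. My first attempt would be to make the levels coordinate dependent. Take \(I=\kappa\times\lambda\) for a large cardinal \(\lambda\), and let the \(n\)-th threshold at the coordinate \((\beta,j)\) be \(\beta+n\), so that the thresholds over all coordinates are cofinal in \(\kappa\). I would then use a pigeonhole argument on the values of \(d_0\), with \(\lambda>\kappa\), to find a fixed \(\beta\) and a finite \(m\) such that infinitely many coordinates \((\beta,j)\) satisfy \(d_0(\beta,j)<\beta+m\). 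If this does not directly work, I would replace the finite indices \(n\) by ordinals below \(\kappa\). The directed-union bound must then be reproved using regularity of \(\kappa\) and a choice of \(|I|\) large enough that every \(d_0\) has \(>\!\kappa\) coordinates at a common level. I expect this cardinal bookkeeping to be the heart of the proof.
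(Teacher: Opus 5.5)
Your Step~1 is correct: $r$ and $s$ are monotone maps between downward well-ordered chains, hence Scott continuous, with $r\circ s=\id$ and $r(\top)=0$. Pulling back along $r^I$ and pushing forward along $s^I$ transfers the separating set, and with Proposition~\ref{prop:countable-prototype} this settles $\kappa=\omega$. The paper does not reduce to $\kappa^{\mathrm{op}}$; it works in $W$ directly with a coinitial sequence $(w_\xi)_{\xi<\kappa}$.

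The gap is the case $\kappa>\omega$, which is the new content of the theorem. You give no working construction there, and the one you propose fails.
\begin{itemize}
\item \emph{Your concrete attempt.} With $I=\kappa\times\lambda$ and thresholds $\beta+n$, the function $d_0(\beta,j)=\beta+\omega$ is above no threshold, so your pigeonhole claim is false. Moreover, let $d_k$ be $0$ at $k$ fixed coordinates $(0,j_0),\dots,(0,j_{k-1})$ and $\beta+\omega$ elsewhere. Then $d_k\in F_k$ for $k\ge1$ and the $d_k$ increase. Their supremum has infinitely many coordinates equal to $0$, so it lies in no $F_n$, and $F$ is not Scott closed.
\item \emph{Any countable family of thresholds.} The choice $\beta+n$ is not the culprit. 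For any thresholds $\theta_n(i)$, $n<\omega$, regularity of $\kappa>\omega$ gives $\sup_n\theta_n(i)<\kappa$. The same chain, built on this supremum instead of $\beta+\omega$, breaks Scott closedness.
\item \emph{Your fallback.} A piece $\{f:|E(f)|\le\mu\}$ is Scott closed only for finite $\mu$, since an increasing chain of length $\mu^+$ of sets of size at most $\mu$ has union of size $\mu^+$. So suppose the bounds $b(\alpha)$ attached to ordinal levels $\alpha<\kappa$ are finite. Putting $\top_I$ in the product closure forces $b$ to be unbounded. Uncountable cofinality then makes $b$ unbounded below some $\gamma<\kappa$. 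The chain that is $0$ on $k$ coordinates and $\gamma$ elsewhere again climbs through the pieces to a supremum outside $F$.
\end{itemize}
So no scheme that counts coordinates above a level can work once $\kappa>\omega$.

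The missing idea is to replace counting by a condition on pairs of coordinates. The paper takes $I=\kappa^+$ and fixes injections $r_j:j\to\kappa$ for $j<\kappa^+$. It colours a pair $i<j$ by $r_j(i)$. Every $E\subseteq I$ of size $\kappa^+$ then contains pairs with colours unbounded in $\kappa$: take $j\in E$ with $|E\cap j|=\kappa$. For $w<\top$ it sets
\[
 F_w=\{f\in W^I:\ \text{for all } i<j,\ f(i)>w \text{ and } f(j)>w \text{ imply } w_{r_j(i)}\ge w\},
 \qquad F=\bigcup_{w<\top}F_w .
\]
The argument then runs as follows.
\begin{itemize}
\item Each $F_w$ is an intersection of the Scott-closed sets $\{f(i)\le w\}\cup\{f(j)\le w\}$, so no cardinality bound is involved.
\item A pigeonhole over the $\kappa^+$ coordinates shows that the levels $w$ with $f\in F_w$ are bounded below. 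Hence each $f\in F$ has a least such level $\mu(f)$.
\item $\mu$ is monotone. For directed $D\subseteq F$, the greatest value $a$ of $\mu$ on $D$ exists because $W$ is downward well ordered, and $D\subseteq F_a$.
\item Let $g_J$ equal $\top$ on $J$ and, elsewhere, a level $v_J$ below the finitely many colours $w_{r_j(i)}$ with $i<j$ in $J$. Then $g_J\in F_{v_J}$, which puts $\top_I$ in the product closure.
\item $\top_I\notin F$, because colours are unbounded on $I$ itself.
\end{itemize}
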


\begin{proof}
Let \(\kappa\) be the least cardinality of a coinitial subset of \(W\).
Since \(W^{\mathrm{op}}\) is order-isomorphic to a limit ordinal of
cofinality \(\kappa\), the standard characterization of ordinal
cofinality
yields a strictly decreasing coinitial chain
\(
  (w_\xi)_{\xi<\kappa}
\)
in \(W\), indexed by \(\kappa\).

Let
\(
  I=\kappa^+
\) be the least cardinal strictly larger than \(\kappa\),
regarded as its initial ordinal. 
For every \(j<I\), we have
\(|j|\leq\kappa\), so choose an injection
\(
  r_j:j\longrightarrow\kappa.
\)

We first record the property of these maps that will be needed.
Let \(E\subseteq I\) have cardinality \(\kappa^+\). Choose \(j\in E\)
such that
\(
  |E\cap j|=\kappa.
\)
Since \(r_j\) is injective,
\(
  |r_j[E\cap j]|=\kappa.
\)
Hence \(r_j[E\cap j]\) is unbounded in \(\kappa\).
Since \((w_\xi)_{\xi<\kappa}\) is decreasing and coinitial, it follows
that
\begin{equation}\label{eq:large-subset-coinitial}
  \{w_{r_j(i)}:i\in E\cap j\}
  \quad\text{is coinitial in }W.
\end{equation}
In particular, for every \(u\in W\), there exist \(i<j\) in \(E\)
such that
\(
  w_{r_j(i)}<u.
\)

Let \(\top\) denote the greatest element of \(W\). For \(w<\top\), define
\[
  F_w=
  \left\{
    f\in W^I:
    f(i)>w,\ f(j)>w
    \Longrightarrow
    w_{r_j(i)}\geq w
    \text{ for all } i<j
  \right\},
\]
and put
\(
  F=\bigcup_{w<\top}F_w.
\)

We first show that each \(F_w\) is Scott closed. Indeed,
\[
  F_w=
  \bigcap_{\substack{i<j\\ w_{r_j(i)}<w}}
  \left(
    \{f\in W^I:f(i)\leq w\}
    \cup
    \{f\in W^I:f(j)\leq w\}
  \right).
\]
For every coordinate \(i\), the set
\(
  \{f\in W^I:f(i)\leq w\}
\)
is Scott closed. Hence \(F_w\) is Scott closed. In particular,
each \(F_w\), and therefore \(F\), is a lower set.

We next show that every \(f\in F\) belongs to \(F_w\) for a least
possible value of \(w\). Put
\[
  A(f)=\{w<\top:f\in F_w\}.
\]
By the definition of \(F\), the set \(A(f)\) is nonempty.

We claim that \(A(f)\) is bounded below. Since
\((w_\xi)_{\xi<\kappa}\) is coinitial in \(W\),
\[
  I
  =
  \bigcup_{\xi<\kappa}
  \{i\in I:f(i)>w_\xi\}.
\]
As \(|I|=\kappa^+\), there exists \(\xi_0<\kappa\) such that
\(
  E=\{i\in I:f(i)>w_{\xi_0}\}
\)
has cardinality \(\kappa^+\). 

We show that
\[
  A(f)\cap\da w_{\xi_0}=\varnothing.
\]
Indeed, let \(w\leq w_{\xi_0}\). By
\eqref{eq:large-subset-coinitial}, applied to \(E\), there exist
\(i<j\) in \(E\) such that
\(
  w_{r_j(i)}<w.
\)
On the other hand,
\[
  f(i)>w_{\xi_0}\geq w,
  \qquad
  f(j)>w_{\xi_0}\geq w.
\]
Thus \(f\notin F_w\). Hence \(A(f)\) is bounded below.

Let
\(
  a=\inf A(f).
\)
This infimum exists because the set of lower bounds of \(A(f)\) is
nonempty and \(W\) is downward well ordered. 
We now show that
\(f\in F_a\).

Suppose \(i<j\) and
\(
  f(i)>a,
  \
  f(j)>a.
\)
Set
\(
  p=\min\{f(i),f(j)\}.
\)
Then \(p>a\). 
Since \(a=\inf A(f)\), there exists \(w\in A(f)\) with
\(
  a\leq w<p;
\)
otherwise \(p\) would be a lower bound of \(A(f)\) strictly larger than \(a\). Since \(f\in F_w\), we obtain
\(
  w_{r_j(i)}\geq w\geq a.
\)
Therefore \(f\in F_a\). Thus
\(
  a=\min A(f).
\)
For later use, write
\(
  \mu(f)=\min A(f).
\)

We now prove that \(F\) is closed under directed suprema.
Let \(D\subseteq F\) be nonempty and directed. Since \(W\) is
downward well ordered, the nonempty set
\(
  \{\mu(d):d\in D\}
\)
has a greatest element. Choose \(d_0\in D\) such that
\[
  a:=\mu(d_0)
  =
  \max\{\mu(d):d\in D\}.
\]
We claim that
\(
  D\subseteq F_a.
\)
Let \(d\in D\). Choose \(e\in D\) with
\(
  d\leq e
  \)  and \(
  d_0\leq e.
\)
Since every \(F_w\) is a lower set,
\(
  A(e)\subseteq A(d)\cap A(d_0).
\)
Consequently,
\(
  \mu(e)\geq\mu(d_0)=a.
\)
By the choice of \(a\),
\(
  \mu(e)\leq a.
\)
Hence
\(
  \mu(e)=a.
\)
Thus
\(
  a\in A(e)\subseteq A(d),
\)
and therefore \(d\in F_a\). This proves \(D\subseteq F_a\).

Since \(F_a\) is Scott closed,
\(
  \sup D\in F_a\subseteq F.
\)
Thus \(F\) is Scott closed.

It remains to show that \(F\) is not closed in the product topology.
Let
\(
  \top_I\in W^I
\)
be the constant function with value \(\top\).

First,
\(
  \top_I\notin F.
\)
Indeed, fix \(w<\top\). Applying
\eqref{eq:large-subset-coinitial} to \(E=I\), we can choose \(i<j\)
such that
\(
  w_{r_j(i)}<w.
\)
Since
\(
  \top_I(i)=\top_I(j)=\top>w,
\)
we have
\(
  \top_I\notin F_w
\)
for every \(w<\top\), and therefore
\(
  \top_I\notin F.
\)

On the other hand, \(\top_I\) belongs to the product closure of \(F\).
Let \(J\subseteq I\) be finite. If \(J\) contains at least two elements, choose \(v_J<\top\) below all the finitely many elements
\(
  w_{r_j(i)} \) for \(
   i<j \in J.
\)
If \(|J|<2\), choose any \(v_J<\top\). Define \(g_J\in W^I\) by
\[
  g_J(i)=
  \begin{cases}
    \top, & i\in J,\\
    v_J, & i\notin J.
  \end{cases}
\]
If
\(
  g_J(i)>v_J
  \ \text{and} \
  g_J(j)>v_J,
\)
then \(i,j\in J\), and hence
\(
  w_{r_j(i)}>v_J.
\)
Therefore
\(
  g_J\in F_{v_J}\subseteq F.
\)
Moreover,
\(
  g_J(i)=\top_I(i) \) for all \(
  i\in J.
\)

Every basic product neighbourhood of \(\top_I\) restricts only
finitely many coordinates. Choosing \(J\) to contain those
coordinates shows that every such neighbourhood meets \(F\).
Consequently,
\(
  \top_I\in\overline{F}^{\,\tprod},
\)
while \(\top_I\notin F\). Thus \(F\) is not product closed.

Since \(F\) is Scott closed but not product closed, 
\(
  \tprod(W^I)\subsetneq\sigma(W^I).
\)
\end{proof}

For a nonempty set \(I\), let \(X_I=\{p,q\}\cup\{d_i:i\in I\}\)
carry the Alexandrov topology of the order whose only strict comparisons
are \(p<d_i\) and \(q<d_i\), for \(i\in I\).
The space \(X_I\) is core compact by Lemma~\ref{lem:alexandrov}.
It is also compact: any open set containing \(p\), together with any
open set containing \(q\), covers \(X_I\).
The following proposition transfers the separation on a power to a
space of continuous functions.

\begin{proposition}\label{prop:fan-transfer}
Let \(R\) be a dcpo, let \(u,v\in R\) be compact, and put
\(M=\up u\cap\up v\). If
\(\tprod(M^I)\subsetneq\sigma(M^I)\) for a nonempty set \(I\),
then \(\Is(C(X_I,R))\subsetneq\sigma(C(X_I,R))\).
\end{proposition}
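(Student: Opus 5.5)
We identify the subspace of \(C(X_I,R)\) sending \(p\mapsto u\) and \(q\mapsto v\) with the power \(M^I\). Then we transfer the product/Scott separation through this identification. By Lemma~\ref{lem:alexandrov}, the Isbell topology on \(C(X_I,R)\) equals the pointwise topology. Since \(X_I\) is Alexandrov, continuous maps are exactly monotone maps \(f:X_I\to R\), that is, maps with \(f(p),f(q)\leq f(d_i)\) for all \(i\).

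First I show the set
\[
  \mathcal U=\{f\in C(X_I,R): u\leq f(p),\ v\leq f(q)\}
\]
is Isbell (pointwise) open, because \(\up u\) and \(\up v\) are Scott open by compactness of \(u,v\). It is therefore also Scott open. Next I consider the map
\[
  \rho:\mathcal U\to M^I,\qquad \rho(f)=(f(d_i))_{i\in I}.
\]
It is well defined by monotonicity of \(f\). In the other direction,
\[
  \epsilon:M^I\to\mathcal U,\qquad \epsilon(g)(p)=u,\ \epsilon(g)(q)=v,\ \epsilon(g)(d_i)=g(i),
\]
is well defined because \(g(i)\in M\) makes \(\epsilon(g)\) monotone. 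We have \(\rho\circ\epsilon=\id\). Both maps are Scott continuous, since directed suprema are pointwise in \(C(X_I,R)\) and in \(M^I\). Also \(M\) is a Scott-open upper set of \(R\), so directed suprema in \(M\) are computed in \(R\). Both maps are continuous for the pointwise/product topologies:
\begin{itemize}
\item preimages under \(\rho\) of \(\{g:g(i)\in V\}\), with \(V\) Scott open in \(M\) hence in \(R\), are pointwise open;
\item preimages under \(\epsilon\) of pointwise subbasics at \(p\) or \(q\) are \(\varnothing\) or everything, and those at \(d_i\) are product subbasics with \(V\cap M\).
\end{itemize}

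Now I take \(W\subseteq M^I\) Scott open but not product open, as given by the hypothesis. I set \(\mathcal W=\rho^{-1}(W)\subseteq\mathcal U\). This set is Scott open in \(\mathcal U\), hence in \(C(X_I,R)\), because \(\mathcal U\) is Scott open. Suppose \(\mathcal W\) were Isbell open. Then it would be pointwise open in \(C(X_I,R)\), hence open in the relative pointwise topology on \(\mathcal U\). In that case \(W=\epsilon^{-1}(\mathcal W)\) would be product open in \(M^I\), which is a contradiction. Together with Proposition~\ref{prop:isbell-below-scott}, this gives the strict inclusion.

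The main care point is checking that the intrinsic Scott topology of \(M\) matches the relative Scott topology, and the same for \(\mathcal U\) inside \(C(X_I,R)\). Both hold because each is a Scott-open sub-dcpo, as noted in the preliminaries. This matching is what makes \(\rho\) and \(\epsilon\) continuous in both senses. No compactness of \(X_I\) is actually needed here beyond Lemma~\ref{lem:alexandrov}.
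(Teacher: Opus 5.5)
Your proof is correct and follows essentially the same route as the paper. Both arguments use the Scott-open set \(\{f: f(p)\geq u,\ f(q)\geq v\}\), the restriction/section pair with \(\rho\circ\epsilon=\id_{M^I}\), and Lemma~\ref{lem:alexandrov} to pull a Scott-open, non-product-open subset of \(M^I\) back to a Scott-open set that cannot be Isbell open; your extra checks (Isbell openness of \(\mathcal U\), pointwise continuity of \(\rho\), Scott continuity of \(\epsilon\)) are harmless but not needed, and your remark that compactness of \(X_I\) plays no role here is accurate.
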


\begin{proof}
Since \(u,v\) are compact, \(M\) is Scott open in \(R\), and
\(H=\{f\in C(X_I,R):f(p)\geq u,\ f(q)\geq v\}\) is Scott open
in \(C(X_I,R)\). Their intrinsic Scott topologies are therefore the
corresponding subspace topologies. For \(f\in H\), monotonicity gives
\(f(d_i)\in M\). Define \(r:H\to M^I\) by
\(r(f)(i)=f(d_i)\), and define \(s:M^I\to H\) by
\(s(g)(p)=u\), \(s(g)(q)=v\), and \(s(g)(d_i)=g(i)\), for
\(g\in M^I\). Each \(s(g)\) is monotone and hence continuous.
Thus the maps are well defined and \(r\circ s=\id_{M^I}\). The map \(r\) is monotone and preserves
pointwise directed suprema, so it is Scott continuous.

To check continuity of \(s\) from the product topology to the relative
pointwise topology, let
\(B=H\cap\bigcap_{k=1}^n\{f\in C(X_I,R):f(y_k)\in V_k\}\)
be a basic open set of \(H\), with \(y_k\in X_I\) and
\(V_k\in\sigma(R)\). Since \(s(M^I)\subseteq H\),
\(s^{-1}(B)=\bigcap_{k=1}^n\{g\in M^I:s(g)(y_k)\in V_k\}\).
For \(y_k=p\) or \(q\), the corresponding factor is either \(M^I\)
or empty. For \(y_k=d_i\), it is \(\pi_i^{-1}(M\cap V_k)\),
where \(\pi_i:M^I\to M\) is the coordinate projection. This is
product open because \(M\cap V_k\in\sigma(M)\). Thus
\(s^{-1}(B)\) is product open, proving continuity of \(s\).

Choose \(O\in\sigma(M^I)\setminus\tprod(M^I)\). Since \(r\) is
Scott continuous, \(r^{-1}(O)\) is Scott open in \(H\), and hence
in \(C(X_I,R)\) because \(H\) itself is Scott open. If it were
Isbell open, Lemma~\ref{lem:alexandrov} would make it pointwise open.
Continuity of \(s\) would then make
\(O=s^{-1}(r^{-1}(O))\) product open, a contradiction.
Together with the inclusion
\(\Is(C(X_I,R))\subseteq\sigma(C(X_I,R))\) from
Proposition~\ref{prop:isbell-below-scott}, this proves the claim.
\end{proof}

Then we obtain the following three required obstructions.

\begin{proposition}\label{prop:standard-detectors}
Let \(W\) be a downward well-ordered chain without a least element.
Then \(\SI(W)\) fails. Moreover, both
\(\SIcpt(K(W))\) and \(\SIcpt(K(W)_\bot)\) fail, with compact
core-compact Alexandrov spaces witnessing the latter two failures.
\end{proposition}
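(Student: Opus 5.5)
We derive all three failures from Theorem~\ref{thm:power-separation}, which provides a set \(I\) with \(\tprod(W^I)\subsetneq\sigma(W^I)\).

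\emph{The chain \(W\).} We argue exactly as in the remark after Proposition~\ref{prop:countable-prototype}. Give \(I\) the discrete topology. It is an Alexandrov space for the discrete order, so it is core compact by Lemma~\ref{lem:alexandrov}. Every map \(I\to\Sigma W\) is continuous, so \(C(I,W)=W^I\) with the pointwise order, and the Scott topology on \(C(I,W)\) is \(\sigma(W^I)\). By Lemma~\ref{lem:alexandrov}, \(\Is(C(I,W))=\tpt(C(I,W))\). On a discrete source the pointwise topology is precisely \(\tprod(W^I)\). Theorem~\ref{thm:power-separation} then gives \(\Is(C(I,W))\subsetneq\sigma(C(I,W))\), so \(\SI(W)\) fails.

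\emph{The obstructions \(K(W)\) and \(K(W)_\bot\).} Here we apply Proposition~\ref{prop:fan-transfer} with \(R=K(W)\) or \(R=K(W)_\bot\), taking \(u=a\), \(v=b\) and the same set \(I\). The space \(X_I\) is compact and core compact, and it is Alexandrov. So the only things left to check are that \(a\) and \(b\) are compact in \(R\) and that \(M=\up a\cap\up b=W\).

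\emph{Compactness of \(a\) and \(b\).} This is the step that needs care. Let \(D\) be directed with \(a\leq\sup D\). If \(\sup D=a\), then every element of \(D\) lies in \(\da a\), which is \(\{a\}\) in \(K(W)\) and \(\{\bot,a\}\) in \(K(W)_\bot\). Since \(\sup D=a\), some element of \(D\) equals \(a\). Otherwise \(\sup D\in W\). If \(D\cap W=\varnothing\), then \(D\) lies in \(\{a,b\}\) or \(\{\bot,a,b\}\). Directedness then forces \(D\) to have a largest element, which is not in \(W\), a contradiction. So some \(d\in D\cap W\), and \(a\leq d\) because \(a\) lies below all of \(W\). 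The same argument works for \(b\).

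\emph{Identifying \(M\).} The common upper bounds of the incomparable elements \(a,b\) are exactly the elements of \(W\), so \(M=W\). The order on \(M\) is that of \(W\). Therefore \(\tprod(M^I)\subsetneq\sigma(M^I)\) by Theorem~\ref{thm:power-separation}.

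Proposition~\ref{prop:fan-transfer} now yields \(\Is(C(X_I,R))\subsetneq\sigma(C(X_I,R))\). Hence \(\SIcpt(K(W))\) and \(\SIcpt(K(W)_\bot)\) both fail, each witnessed by the compact core-compact Alexandrov space \(X_I\).
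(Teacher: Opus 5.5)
Your proposal is correct and follows the paper's proof: a discrete index set for \(W\), then Proposition~\ref{prop:fan-transfer} with \(u=a\), \(v=b\) and \(M=\up a\cap\up b=W\) for \(K(W)\) and \(K(W)_\bot\). The only cosmetic difference is that the paper gets compactness from the observation that every directed subset of \(R\) has a greatest element, so every element is compact, whereas you check \(a\) and \(b\) directly by cases; both are valid.
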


\begin{proof}
Choose \(I\) as in Theorem~\ref{thm:power-separation}. Give \(I\) the
discrete topology. Then \(C(I,W)=W^I\), and
Lemma~\ref{lem:alexandrov} identifies the Isbell topology with the
product topology. Hence \(\SI(W)\) fails.

Now let \(R=K(W)\) or \(K(W)_\bot\). Every directed subset of \(R\)
has a greatest element: if it meets \(W\), the greatest element of its
intersection with \(W\) is greatest in the whole set; otherwise it is
a finite directed set. Hence every element of \(R\) is compact.
Moreover, \(\up a\cap\up b=W\). Therefore
Proposition~\ref{prop:fan-transfer}, together with
Theorem~\ref{thm:power-separation}, gives the required witness \(X_I\).
\end{proof}

Finally, we record the finite obstructions needed for the compact-source
classification. In this case countably many coordinates already
suffice.

\begin{corollary}\label{cor:finite-detectors}
The properties \(\SIcpt(\mathsf B_4^\top)\) and
\(\SIcpt(\mathsf B_4)\) both fail.
\end{corollary}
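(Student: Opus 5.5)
Both failures will come from Proposition~\ref{prop:fan-transfer}, using the compact core-compact Alexandrov space \(X_I\) as the witness. Take \(R=\mathsf B_4\) or \(R=\mathsf B_4^\top\), and let \(u,v\) be its two minimal elements. Since \(R\) is finite, every directed subset has a greatest element, so every element of \(R\) is compact; in particular \(u\) and \(v\) are compact. Then
\[
  M=\up u\cap\up v
\]
equals \(\{a,b\}\) when \(R=\mathsf B_4\), and \(\{a,b,\top\}\) when \(R=\mathsf B_4^\top\). So it remains to find a nonempty set \(I\) with \(\tprod(M^I)\subsetneq\sigma(M^I)\). By Proposition~\ref{prop:fan-transfer}, this gives \(\Is(C(X_I,R))\subsetneq\sigma(C(X_I,R))\). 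Since \(X_I\) is compact and core compact, \(\SIcpt(R)\) then fails.

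For the separation, I take \(I\) countably infinite, say \(I=\mathbb N\), and treat both cases together. The key observation is that in both cases \(\{a,b\}\) is a lower set of \(M\) and an antichain. Hence \(\{a,b\}^I\) is a lower set of \(M^I\), and it is an antichain in \(M^I\): distinct elements differ at some coordinate, where one value is \(a\) and the other is \(b\). Consequently every subset \(F\subseteq\{a,b\}^I\) is a lower set in \(M^I\), because anything below an element of \(\{a,b\}^I\) must equal it. Moreover, every directed subset of \(F\) is a singleton, so \(F\) is Scott closed in \(M^I\).

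Now choose
\[
  F=\{g\in\{a,b\}^I: g(i)=b \text{ for all but finitely many } i\}.
\]
The constant function \(a_I\) with value \(a\) is not in \(F\), since \(I\) is infinite. On the other hand, a basic product neighbourhood of \(a_I\) restricts only finitely many coordinates, say those in a finite set \(J\). The function that equals \(a\) on \(J\) and \(b\) elsewhere lies in \(F\) and in that neighbourhood. So \(a_I\) lies in the product closure of \(F\), and therefore \(F\) is Scott closed but not product closed. The inclusion \(\tprod(M^I)\subseteq\sigma(M^I)\) always holds, because coordinate projections are Scott continuous. This gives the strict inclusion.

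The only step that needs care is confirming that \(F\) is Scott closed in \(M^I\) when \(M=\{a,b,\top\}\), where \(M^I\) has many nontrivial directed sets. The argument above handles this: \(F\) lies inside the lower set \(\{a,b\}^I\), which is an antichain, so only singleton directed sets can occur inside \(F\).
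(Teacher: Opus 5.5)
Your proposal is correct and follows the paper's proof. Both apply Proposition~\ref{prop:fan-transfer} with $I=\mathbb N$ to a Scott-closed subset of the antichain $\{a,b\}^{\mathbb N}$ of minimal elements whose product closure contains the constant function with value $a$. The only difference is the choice of that subset: the paper uses the functions $z_n$ that equal $a$ except for a single $b$ at coordinate $n$, while you use the functions that equal $b$ at all but finitely many coordinates.
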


\begin{proof}
Let \(R\) be either \(\mathsf B_4^\top\) or \(\mathsf B_4\), and put
\(M=\up u\cap\up v\). Thus \(M=\{a,b,\top\}\) in the first
case and \(M=\{a,b\}\) in the second.

Let \(z\in M^{\mathbb N}\) be constantly \(a\), and let
\(z_n\) agree with \(z\) except at coordinate \(n\),
where it has value \(b\). The set
\(\{z_n:n\in\mathbb N\}\) is Scott closed: its elements are
minimal in \(M^{\mathbb N}\), and every directed subset of it is a
singleton. It omits \(z\), but every basic product
neighbourhood of \(z\) contains some \(z_n\). Hence
\[
  \tprod(M^{\mathbb N})\subsetneq\sigma(M^{\mathbb N}).
\]
Since \(u\) and \(v\) are compact,
Proposition~\ref{prop:fan-transfer} gives the conclusion.
\end{proof}

\section{The three classifications}
\label{sec:classifications}

\subsection{All core-compact sources}

The first classification follows once the three non-bicomplete retracts have
been excluded.

\begin{proof}[Proof of Theorem~\ref{thm:main-corecompact}]
Suppose that $L$ is bounded complete. Since $L$ is continuous, it is
bicomplete, and Theorem~\ref{thm:xi-yang}(i) gives $\SI(L)$.

Conversely, assume $\SI(L)$. The continuous dcpo $L$ is sober and meet
continuous. If it were not bicomplete,
Theorem~\ref{thm:JJL-bicomplete}(i) would give a Scott-continuous retract
\[
  R\in\{W,K(W),K(W)_\bot\}
\]
for some downward well-ordered chain $W$ without a bottom element.
Proposition~\ref{prop:target-inheritance}(i) would imply $\SI(R)$, contrary
to Proposition~\ref{prop:standard-detectors}. Hence
$L$ is bicomplete, and Theorem~\ref{thm:xi-yang}(i) now shows that $L$ is
bounded complete.
\end{proof}

\subsection{All compact core-compact sources}

For compact sources, the absence of a least element is separated from the
remaining failures of conditional bounded completeness by passing to a
lifting.

\begin{lemma}\label{lem:lifting-cbc}
Let $L_\bot$ be obtained from a dcpo $L$ by adjoining a new least element.
Then $L$ is conditionally bounded complete if and only if $L_\bot$ is bounded
complete. If $L$ is continuous, then $L_\bot$ is continuous.
\end{lemma}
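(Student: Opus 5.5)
The lemma has two parts, and I will prove each directly from the definitions. The first is the equivalence between conditional bounded completeness of $L$ and bounded completeness of $L_\bot$. The second is that continuity passes from $L$ to $L_\bot$. Write $\bot$ for the new element.

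\emph{Completeness.} Suppose first that $L$ is conditionally bounded complete, and let $A\subseteq L_\bot$ have an upper bound in $L_\bot$. If $A\subseteq\{\bot\}$, then $\bot$ is its supremum; this covers the empty subset. Otherwise $A'=A\setminus\{\bot\}$ is a nonempty subset of $L$. Any upper bound of $A$ lies in $L$, so $A'$ is bounded above in $L$. Hence $\sup_L A'$ exists, and it is also $\sup_{L_\bot}A$, because upper bounds of $A$ in $L_\bot$ are exactly the upper bounds of $A'$ in $L$. Conversely, suppose $L_\bot$ is bounded complete, and let $A\subseteq L$ be nonempty with an upper bound in $L$. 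Then $s=\sup_{L_\bot}A$ exists. Since $A$ is nonempty, $s\geq$ some element of $L$, so $s\in L$. It is then the supremum in $L$, since the upper bounds of $A$ in $L$ and in $L_\bot$ coincide. I also need $L_\bot$ to be a dcpo. A directed set equal to $\{\bot\}$ has supremum $\bot$. Any other directed set $D$ has $D\setminus\{\bot\}$ directed and cofinal in $D$, with the same supremum as computed in $L$.

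\emph{Continuity.} The key step is to describe the way-below relation on $L_\bot$. I claim that $\bot\ll x$ for every $x$, since directed sets are nonempty. I also claim that for $u,x\in L$, $u\ll x$ in $L_\bot$ if and only if $u\ll x$ in $L$. For the nontrivial direction, take a directed $D\subseteq L_\bot$ with $x\leq\sup D$. Then $D$ is not $\{\bot\}$, so $D\setminus\{\bot\}$ is directed in $L$ with the same supremum. Hence some $d$ in it satisfies $u\leq d$. The converse is immediate because directed subsets of $L$ are directed in $L_\bot$ with the same supremum.

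It follows that ${\Downarrow}x$ computed in $L_\bot$ equals $\{\bot\}\cup{\Downarrow}_L x$ for $x\in L$. This set is directed, since ${\Downarrow}_L x$ is nonempty and directed, and its supremum is $x$. For $x=\bot$, it is $\{\bot\}$, with supremum $\bot$. Therefore $L_\bot$ is continuous.

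The only mildly delicate point is the bookkeeping around the empty set and the singleton $\{\bot\}$. No step is a real obstacle.
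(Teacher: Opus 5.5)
Your proof is correct and matches the paper's argument for the completeness equivalence: you treat $A\subseteq\{\bot\}$ separately, otherwise pass to $A\cap L$, and in the converse you note that the supremum of a nonempty subset of $L$ cannot be the new bottom. The only difference is that the paper cites Abramsky and Jung for the continuity of $L_\bot$ (and takes for granted that $L_\bot$ is a dcpo), whereas you verify both directly via $\{y\in L_\bot: y\ll x\}=\{\bot\}\cup\{u\in L: u\ll_L x\}$, which is a valid self-contained substitute.
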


\begin{proof}
If $L$ is conditionally bounded complete and $A\subseteq L_\bot$ has an
upper bound, then $\sup A=\bot$ when $A$ is empty or contained in
$\{\bot\}$; otherwise the supremum of $A\cap L$ in $L$ is also the supremum
of $A$ in $L_\bot$. Thus $L_\bot$ is bounded complete.

Conversely, if $L_\bot$ is bounded complete and $A\subseteq L$ is nonempty
and upper bounded in $L$, its supremum in $L_\bot$ cannot be the newly
adjoined bottom and therefore lies in $L$. Preservation of continuity by
lifting is standard \cite[Section~3.2.5]{AbramskyJung1994}.
\end{proof}

\begin{proof}[Proof of Theorem~\ref{thm:main-compact}]
Suppose first that $L$ is conditionally bounded complete. By
Lemma~\ref{lem:lifting-cbc}, $P=L_\bot$ is continuous and bounded complete.
Theorem~\ref{thm:main-corecompact} gives $\SI(P)$. The sub-dcpo
$L=P\setminus\{\bot\}$ is Scott open, so part~(ii) of
Proposition~\ref{prop:target-inheritance} gives $\SIcpt(L)$.

Conversely, assume $\SIcpt(L)$ and put $P=L_\bot$. Suppose, towards a
contradiction, that $L$ is not conditionally bounded complete. Then $P$ is
pointed, continuous, sober, and not bounded complete.

We shall use the following construction for each pointed retract of $P$.
Suppose that $R$ is pointed and that
\[
  R\mathrel{\mathop{\rightleftarrows}^{\iota}_{\rho}}P,
  \qquad \rho\circ\iota=\id_R,
\]
is a Scott-continuous retraction. Put
\[
  S=R\setminus\{\bot_R\},
  \qquad
  U=\rho^{-1}(S).
\]
Since $\bot_P\leq\iota(\bot_R)$, one has
$\rho(\bot_P)=\bot_R$. Hence $U$ is a Scott-open sub-dcpo of $L$, and
$\iota(S)\subseteq U$. The restrictions of $\rho$ and $\iota$ exhibit $S$
as a Scott-continuous retract of $U$. Proposition~\ref{prop:target-inheritance}
therefore gives
\begin{equation}\label{eq:punctured-transfer}
  \SIcpt(L)\Longrightarrow\SIcpt(S).
\end{equation}

If $P$ were not bicomplete, Theorem~\ref{thm:JJL-bicomplete}(ii) would give a
retract $K(W)_\bot$ of $P$. Applying \eqref{eq:punctured-transfer} with
$S=K(W)$ contradicts Proposition~\ref{prop:standard-detectors}. Thus $P$ is
bicomplete.

If $P$ is not an $L$-dcpo, Theorem~\ref{thm:JJL-nonL} and
\eqref{eq:punctured-transfer} imply $\SIcpt(\mathsf B_4^\top)$, contrary to
Corollary~\ref{cor:finite-detectors}. If $P$ is an $L$-dcpo, then
Theorem~\ref{thm:AJ-L-not-BC} and the same transfer imply
$\SIcpt(\mathsf B_4)$, again contradicting
Corollary~\ref{cor:finite-detectors}. Hence $P$ is bounded complete, and
Lemma~\ref{lem:lifting-cbc} shows that $L$ is conditionally bounded complete.
\end{proof}

\subsection{All RW-spaces}

For RW-spaces we use Alexandrov detectors with a greatest point.
For a nonempty set $I$, define the capped star and capped two-root fan by
\[
  S_I=\{d_i:i\in I\}\cup\{t\},\qquad d_i<t,
\]
and
\[
  F_I=\{p,q,t\}\cup\{d_i:i\in I\},\qquad p,q<d_i<t.
\]
Both carry their Alexandrov topologies.

\begin{figure}[!htbp]
\centering
\begin{minipage}{0.44\linewidth}
\centering
\begin{tikzpicture}[x=0.85cm,y=0.75cm,
 every node/.style={circle,draw,inner sep=1.1pt,minimum size=5mm}]
\node (d1) at (-1.3,0) {$d_1$};
\node (d2) at (-0.4,0) {$d_2$};
\node[draw=none] at (0.45,0) {$\cdots$};
\node (di) at (1.3,0) {$d_i$};
\node (t) at (0,1.4) {$t$};
\draw (d1)--(t) (d2)--(t) (di)--(t);
\end{tikzpicture}

$S_I$
\end{minipage}
\hfill
\begin{minipage}{0.52\linewidth}
\centering
\begin{tikzpicture}[x=0.82cm,y=0.70cm,
 every node/.style={circle,draw,inner sep=1.1pt,minimum size=5mm}]
\node (p) at (-0.9,0) {$p$};
\node (q) at (0.9,0) {$q$};
\node (d1) at (-1.35,1.25) {$d_1$};
\node (d2) at (-0.42,1.25) {$d_2$};
\node[draw=none] at (0.42,1.25) {$\cdots$};
\node (di) at (1.35,1.25) {$d_i$};
\node (t) at (0,2.5) {$t$};
\foreach \x in {d1,d2,di}{\draw (p)--(\x); \draw (q)--(\x); \draw (\x)--(t);}
\end{tikzpicture}

$F_I$
\end{minipage}
\caption{The capped Alexandrov detectors for the RW-space classification.}
\label{fig:rw-detectors}
\end{figure}

The capped spaces have the RW property and retain the power obstruction.

\begin{proposition}\label{prop:rw-detectors}
Let $W$ be a downward well-ordered chain without a bottom element. There are
connected RW-spaces witnessing the failure of $\SIrw(W)$,
$\SIrw(K(W))$, and $\SIrw(K(W)_\bot)$.
\end{proposition}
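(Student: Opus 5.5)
The plan is to use $S_I$ for $W$ and $F_I$ for $K(W)$ and $K(W)_\bot$, with $I$ chosen as in Theorem~\ref{thm:power-separation}. There are two tasks: show that these spaces are connected RW-spaces, and transfer the power separation to $C(S_I,\cdot)$ and $C(F_I,\cdot)$.

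\emph{The spaces.} Both spaces carry an Alexandrov topology, so Lemma~\ref{lem:alexandrov} gives core compactness and identifies the Isbell topology with the pointwise topology. They are connected because every nonempty open (upper) set contains $t$. For property RW, note that in an Alexandrov space every upper set is open. Hence a nonempty intersection $V=\bigcap V_i$ is an upper set containing $t$, and so is $U=\bigcap U_i$. Any decomposition of $U$ into pairwise disjoint nonempty open sets must consist of a single piece, since each piece contains $t$; the only decomposition is $\{U\}$. Take $\mathcal A=\{V\}$. Then for $B=U$ we get $B\cap V=V=\bigcup\{A\in\mathcal A:A\cap B\neq\varnothing\}$. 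Thus both spaces are RW-spaces. Connectedness is exactly what makes this step trivial.

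\emph{Transfer for $W$.} Work on $C(S_I,W)$, which is the set of monotone maps. Define $r:C(S_I,W)\to W^I$ by $r(f)(i)=f(d_i)$. Define $s:W^I\to C(S_I,W)$ by $s(g)(d_i)=g(i)$ and $s(g)(t)=\top$; this is monotone because $\top$ is the top of $W$, which exists since $W$ is downward well ordered. Then $r\circ s=\id$, and $r$ is Scott continuous because suprema are pointwise.

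$s$ is continuous from the product topology to the pointwise topology. A subbasic set $\{f:f(t)\in V\}$ pulls back to all of $W^I$ or to $\varnothing$, and the sets at $d_i$ pull back to product-open sets. Now take a Scott-open $O\subseteq W^I$ that is not product open. Then $r^{-1}(O)$ is Scott open. If it were Isbell, hence pointwise, open, then $O=s^{-1}r^{-1}(O)$ would be product open, a contradiction.

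\emph{Transfer for $K(W)$ and $K(W)_\bot$.} This repeats Proposition~\ref{prop:fan-transfer} with $F_I$ in place of $X_I$. Let $R$ be either target and $M=\up a\cap\up b=W$. Since $a$ and $b$ are compact, $H=\{f:f(p)\geq a,\ f(q)\geq b\}$ is Scott open in $C(F_I,R)$. For $f\in H$, monotonicity gives $f(d_i)\in W$. Define $r:H\to W^I$ by $r(f)(i)=f(d_i)$. Define $s(g)$ by $p\mapsto a$, $q\mapsto b$, $d_i\mapsto g(i)$, $t\mapsto\top$; this is monotone since $\top$ is the greatest element of $R$. The same continuity argument as above applies: the coordinates at $p$, $q$ and $t$ give trivial preimages. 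The Scott-open set $r^{-1}(O)$ is then not Isbell open in $C(F_I,R)$. Combined with Proposition~\ref{prop:isbell-below-scott}, this yields strict inclusion.

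The main point to check is that adding the cap $t$ does not destroy the retraction. That works because $W$, $K(W)$ and $K(W)_\bot$ all have the greatest element $\top$, so the constant value $\top$ at $t$ is compatible with monotonicity and contributes only trivial pointwise constraints.
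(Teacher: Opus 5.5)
Your proof is correct and follows the same approach as the paper. It uses the capped Alexandrov detectors $S_I$ and $F_I$, extends maps by the constant value $\top$ at the cap $t$, and uses Lemma~\ref{lem:alexandrov} to identify the Isbell topology with the pointwise topology. The differences are minor: you verify property RW directly (every decomposition of $U$ is $\{U\}$ because all nonempty open sets contain $t$) instead of citing Kou--Luo, and you compose the paper's restriction--extension retraction with the fan retraction into a single retraction onto $W^I$ instead of reducing to Proposition~\ref{prop:standard-detectors}.
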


\begin{proof}
The Alexandrov spaces $S_I$ and $F_I$ are core compact by
Lemma~\ref{lem:alexandrov}. Every nonempty open set in either space
contains $t$. Hence both spaces are connected, and
\cite[Proposition~2.10]{KouLuo2003} shows that they are RW-spaces.

Choose $I$ as in Theorem~\ref{thm:power-separation}. For $R=W$, let
$Z=\{d_i:i\in I\}$ be discrete and put $Y=S_I$. For $R=K(W)$ or
$K(W)_\bot$, put $Z=X_I$ and $Y=F_I$. In each case, $Y$ is obtained
from $Z$ by adjoining a greatest point $t$, and $R$ has a greatest element
$\top_R$.

Restriction gives a map
\[
  r:C(Y,R)\to C(Z,R),\qquad r(g)=g|_Z,
\]
with section $s$ defined by
\[
  s(f)|_Z=f,\qquad s(f)(t)=\top_R.
\]
Both maps are Scott continuous and pointwise continuous, and
$r\circ s=\id_{C(Z,R)}$. If the Scott and pointwise topologies agreed on
$C(Y,R)$, then, for every Scott-open $O\subseteq C(Z,R)$, the identity
$O=s^{-1}(r^{-1}(O))$ would make $O$ pointwise open. By
Lemma~\ref{lem:alexandrov}, this would imply coincidence on $C(Z,R)$,
contrary to the witnesses in Proposition~\ref{prop:standard-detectors}.
Since $Y$ is a connected RW-space, the result follows.
\end{proof}

\begin{proof}[Proof of Theorem~\ref{thm:main-rw}]
Assume first that $\SIrw(L)$ holds. If $L$ were not bicomplete,
Theorem~\ref{thm:JJL-bicomplete}(i) would yield a Scott-continuous retract
\[
  R\in\{W,K(W),K(W)_\bot\}.
\]
Proposition~\ref{prop:target-inheritance}(i) would imply $\SIrw(R)$, contrary
to Proposition~\ref{prop:rw-detectors}. Thus $L$ is bicomplete, and
Theorem~\ref{thm:xi-rw} shows that $L$ is a pointed continuous
$L$-domain.

Conversely, let $L$ be a pointed continuous $L$-domain. Every filtered subset
has an infimum: after choosing one member $d_0$, its part below $d_0$ is
downward cofinal and has an infimum in the complete lattice $\da d_0$.
Thus $L$ is bicomplete, and Theorem~\ref{thm:xi-rw} gives $\SIrw(L)$.
\end{proof}

\subsection{A further question}

For a continuous dcpo $L$, let $\SIcoh(L)$ mean coincidence for every
core-compact coherent source. A \emph{bifinite domain} is a pointed dcpo
admitting a directed family of finite-image idempotent Scott-continuous maps
$d\leq\id$ whose pointwise supremum is the identity
\cite[Theorem~4.2.6]{AbramskyJung1994}. A \emph{continuous B-domain} is a
Scott-continuous retract of a bifinite domain. Xi and Liang proved that every
continuous B-domain satisfies $\SIcoh(L)$ \cite{XiLiang2009}.

The Alexandrov detector used in Proposition~\ref{prop:fan-transfer} is not
coherent when $I$ is infinite: the compact saturated sets $\up p$ and
$\up q$ have intersection $\{d_i:i\in I\}$, which is covered by the
open singletons $\{d_i\}$ and has no finite subcover. Thus the detector
method above does not determine the maximal target class for coherent
sources.

\begin{question}\label{q:coherent}
Characterize the continuous dcpos $L$ satisfying $\SIcoh(L)$. 
\end{question}

\end{document}